\documentclass[a4paper,oneside,11pt]{article}%
\usepackage{amsmath}
\usepackage{cite}
\usepackage[utf8]{inputenc}
\usepackage{amsfonts}
\usepackage{indentfirst}
\usepackage{amssymb}
\usepackage{graphicx}%
\usepackage{xcolor}
\usepackage{appendix}
\usepackage{amsthm}
\usepackage[colorlinks,citecolor=blue,urlcolor=blue]{hyperref}

\setcounter{MaxMatrixCols}{30}
\providecommand{\U}[1]{\protect \rule{.1in}{.1in}}

\pagenumbering{arabic}
\setlength{\textwidth}{150mm}
\setlength{\textheight}{220mm}
\headsep=15pt \topmargin=-5mm \oddsidemargin=0.46cm
\evensidemargin=0.46cm \raggedbottom
\newtheorem{theorem}{Theorem}[section]

\newtheorem{assumption}[theorem]{Assumption}
\newtheorem{example}[theorem]{Example}

\newtheorem{lemma}[theorem]{Lemma}

\newtheorem{remark}[theorem]{Remark}

\numberwithin{equation}{section}

\usepackage{hyperref}
\begin{document}

\title{Stochastic maximum principle for recursive optimal control problems with varying terminal time}
\author{Jiaqi Wang\thanks{Shandong University-Zhong Tai Securities Institute for Financial Studies, Shandong University, PR China.} \quad Shuzhen Yang\thanks{Shandong University-Zhong Tai Securities Institute for Financial Studies, Shandong University, PR China, (yangsz@sdu.edu.cn). This work was supported by the National Key R\&D program of China (Grant No.2018YFA0703900, ZR2019ZD41), National Natural Science Foundation of China (Grant No.11701330), and Taishan Scholar Talent Project Youth Project.}
}
\date{}
\maketitle

\textbf{Abstract}: This paper introduces a new recursive stochastic optimal control problem driven by a forward-backward stochastic differential equations (FBSDEs), where the terminal time varies according to the constraints of the state of the forward equation. This new optimal control problem can be used to describe the investment portfolio problems with the varying investment period. Based on novel $\rho$-moving variational and adjoint equations, we establish the stochastic maximum principle for this optimal control problem including the classical optimal control problem as a particular case. Furthermore, we propose an example to verify our main results.

\textbf{Keywords}: FBSDEs; stochastic maximum principle; varying terminal time


\addcontentsline{toc}{section}{\hspace*{1.8em}Abstract}

\section{Introduction}
In the classical recursive stochastic control problem driven by FBSDEs, the cost functional is given as follows:
\begin{equation}\label{1.1}
\mathit{J\left ( u\left ( \cdot  \right )  \right ) }
=\mathbb{E} \left [\int_{0}^{T}
l\left ( X^u\left ( t  \right ) ,Y^u\left ( t \right ),Z^u\left ( t \right ),u\left ( t \right )
,t    \right )dt+\beta\left ( X^u\left (T \right )  \right ) +
\gamma \left ( Y^u\left ( 0 \right )  \right )    \right ],
\end{equation}
where
$l(\cdot)$ denotes the running cost, $\beta(\cdot)$ and $\gamma(\cdot)$ are the initial and terminal cost. The tuple $\left ( X^u\left ( t  \right ) ,Y^u\left ( t \right ),Z^u\left ( t \right )\right)$ satisfy the following controlled FBSDEs:
\begin{equation*}
\begin{cases}
 X^u\left ( t \right )=X_0 + \int_{0}^{t} f\left ( s,X^u\left ( s \right ),  u\left ( s \right ) \right )ds+ \int_{0}^{t}\sigma \left ( s,X^u\left ( s \right ),  u\left ( s \right ) \right )dW(s)
 \\Y^u\left ( t \right ) = \Psi \left ( X^u\left ( T \right )\right ) -\int_{t}^{T}g\left ( s,X^u\left ( s\right ), Y^u\left ( s \right ) ,Z^u\left ( s \right ) ,u\left ( s \right )   \right )ds -\int_{t}^{T}Z^u\left ( s \right ) dW(s),
\end{cases}
\end{equation*}
where $f$, $\sigma$, $\Psi$ and $g$ are given functions.

Well-known that the stochastic maximum principle is a powerful tool for solving stochastic optimal control problem, and many researchers have made contributions in this fields. We refer readers to \cite{peng1993backward} for the new form of stochastic maximum principle of FBSDEs which is the theoretical foundation of the recursive optimal control problem. Further more see \cite{peng1999fully,tao2012maximum,wang2009maximum}.  For a class of stochastic partial differential equations controlled through the boundary,
\cite{guatteri2011stochastic} proposed the necessary conditions for solving the stochastic optimal control problem. \cite{wu2003fbsde} studied the linear quadratic stochastic optimal control problem with random jumps. \cite{wang2013optimal} considered an optimal control problem where the control is adapted to a sub-filtration. \cite{han2010maximum} investigated the optimal control problems for backward doubly stochastic control systems. Furthermore, we refer readers to \cite{tang2014variational, li2012stochastic} for more regarding theories of stochastic maximum principles.

There are many researches considered the optimal control problems under state constraints. \cite{wei2016stochastic} considered an optimal control problem with state constraints under a mean-field FBSDEs. \cite{ji2013maximum} studied the optimal control problem with terminal constraints where the system is driven by a fully coupled FBSDEs. \cite{huang2020necessary} derived the necessary condition for the existence of the optimal control under FBSDEs with Levy process and established the maximum principle with respect to some initial and terminal state constraints. Furthermore, we refer readers to \cite{ji2012maximum, liangquan2020necessary, moon2022linear, arada2000optimal} for more regarding researches on optimal control under state constraints.

\cite{yang2020varying} introduced a novel varying terminal time optimal control problem where the terminal time varies according to control $u(\cdot)$ which can improve the performance of the classical cost functional under stochastic differential equations. We refer readers to \cite{yang2022varying} for the application of the varying terminal time optimal control problem in mean-variance investment model. Indeed, a varying terminal investment time can reduce variance by cost functional (\ref{1.1}). In this present paper, we extend the varying terminal time optimal control problem to a recursive varying terminal optimal control problem, that is, a stopping criterion for the terminal time of FBSDEs is described as follows:
$$
 \tau ^u=inf\left \{ t:\mathbb{E}\left [ \Phi \left ( X^u\left ( t
 \right )  \right )  \right ] \ge \alpha , t\in\left [ 0,\mathit{T}  \right ]  \right \} \wedge \mathit{T}.
$$
Here, $\Phi$ is a given function used to describe the constraints on state $X^u$. The target is to minimize the following cost functional under the varying terminal time $\tau^u$:
\begin{equation}\label{11cost}
\mathit{J\left ( u\left ( \cdot  \right )  \right ) }
=\mathbb{E} \left [\int_{0}^{\tau^u}
l\left ( X^u\left ( t  \right ) ,Y^u\left ( t \right ),Z^u\left ( t \right ),u\left ( t \right )
,t    \right )dt+\beta\left ( X^u\left (\tau^u \right )  \right ) +
\gamma \left ( Y^u\left ( 0 \right )  \right )    \right ],
\end{equation}
where $\left ( X^u\left ( t  \right ) ,Y^u\left ( t \right ),Z^u\left ( t \right )\right)$ satisfies a new form of FBSDEs with a varying terminal time (See equation (\ref{2.4})). We denote the optimal pair for cost functional (\ref{11cost}) by $(\bar{u}(\cdot), \tau^{\bar{u}})$. Based on three kinds of case of optimal terminal time $\tau^{\bar{u}}$:

$(i)$ $\tau^{\bar{u}}<T$,

$(ii)$  $\inf\left \{ t:\mathbb{E}\left [ \Phi \left ( X^u\left ( t
 \right )  \right )  \right ] \ge \alpha , t\in\left [ 0,\mathit{T}  \right ]  \right \} = T$,

 $(iii)$ $\left \{ t:\mathbb{E}\left [ \Phi \left ( X^u\left ( t
 \right )  \right )  \right ]\notag \right. \left. \ge \alpha, t\in\left [ 0,\mathit{T}  \right ]  \right \} = \emptyset$,

 \noindent we establish a novel stochastic maximum principle providing a necessary condition for the above new optimal control problem.

 The main contributions of this study are twofold:

(\textbf{i}) A novel recursive stochastic optimal control problem is proposed, which can be used to describe the varying terminal time investment problem.

(\textbf{ii}) Based on novel $\rho$-moving variational and adjoint equations, we establish the stochastic maximum principe for this optimal control problem including the classical optimal control problem as a particular case.

The reminder of the paper is organized as follows. In section \ref{section2}, we introduce a varying terminal stochastic optimal control problem driven by FBSDEs and present the main results of this paper. Then, we present some preliminaries results for the proof of the stochastic maximum principle in section \ref{sectionIII}. We prove the stochastic maximum principle and construct an example to illustrate the main results in section \ref{sectionIV}. In section \ref{sectionV}, we conclude this paper.

\section{Varying terminal time optimal control problem}\label{section2}

In this section, we introduce a new stochastic optimal control problem driven by a FBSDEs with a varying terminal time. The objective of this study is to derive a stochastic maximum principle for this new optimal control problem. Let $\mathit{W}$ be a d-dimensional standard Brownian motion defined on a complete filtered probability space $\left ( \Omega ,\ \left \{ \mathcal{F}_t  \right \}_{t\ge 0},\ \mathit{P} \right ) $, where $\left \{ \mathcal{F}_t  \right \}_{t\ge 0}$ is the argumentation of the natural filtration generated by the Brownian motion $\mathit{W}$.
Let $\mathit{U}$ be a nonempty convex subset of $\mathbb{R} ^k$ and  $\mathcal{U}[0,T]$ be
$$
\mathcal{U}[0,T]=\left \{ u\left ( \cdot  \right ) \in\mathcal{M}^2\left ( \mathbb{R}^k  \right )\ | \ u\left ( t
 \right )\in \mathit{U},\ 0\leq t\leq T,\ a.s.    \right \}.
$$
where $\mathit{T}>0$ is a given constant.

For any given admissible control $u\left ( \cdot  \right ) \in \mathcal{U}[0,T]$, we introduce the following varying terminal time controlled FBSDEs:
\begin{equation}
\label{2.4}
\begin{cases}
 dX^u\left ( t \right )=f\left ( t,X^u\left ( t \right ),  u\left ( t \right ) \right )dt+\sigma \left ( t,X^u\left ( t \right ),  u\left ( t \right ) \right )dW(t),
 \\X\left ( 0 \right ) = x_0,
 \\dY^u\left ( t \right ) = g\left ( t,X^u\left ( t \right ), Y^u\left ( t \right ) ,Z^u\left ( t \right ) ,u\left ( t \right )   \right )dt +Z^u\left ( t \right ) dW(t),
 \\Y\left ( \tau^u \right ) = \Psi \left ( X^u\left ( \tau^u \right )  \right),
\end{cases}
\end{equation}
where $x_0$ and function $\Psi \left ( \cdot  \right ) $ are given, and terminal time $\tau^u$ satisfies
\begin{equation}\label{varying_time}
    \tau ^u=inf\left \{ t:\mathbb{E}\left [ \Phi \left ( X^u\left ( t
 \right )  \right )  \right ] \ge \alpha , t\in\left [ 0,\mathit{T}  \right ]  \right \} \wedge \mathit{T}.
\end{equation}
In this study, we focus on a varying terminal time which depends on the forward state $X^u(\cdot)$. The related cost functional is given as follows:
\begin{equation}\label{cost_functional}
    \mathit{J\left ( u\left ( \cdot  \right )  \right ) }
=\mathbb{E} \left [\int_{0}^{\tau ^u}
l\left ( X^u\left ( t  \right ) ,Y^u\left ( t \right ),Z^u\left ( t \right ),u\left ( t \right )
,t    \right )dt+\beta \left ( X^u\left ( \tau ^u \right )  \right ) +
\gamma \left ( Y^u\left ( 0 \right )  \right )    \right ].
\end{equation}

Let $\mathit{f},\ \sigma,\ \mathit{g},\ \Psi,\ \mathit{l},\ \beta,\ \gamma,\ \Phi$ be such that:
\begin{align*}
&\mathit{f}\left (\omega, t,x,u \right ) :\Omega\times \left [ 0,T \right ] \times \mathbb{R}^n \times \mathit{U} \to \mathbb{R}^n,\\
&\sigma\left (\omega,  t,x,u \right ) :\Omega\times\left [ 0,T \right ] \times \mathbb{R}^n \times \mathit{U} \to \mathbb{R}^{n \times d},\\
&\mathit{g}\left (\omega,  t,x,y,z,u \right ) :\Omega\times\left [ 0,T \right ] \times \mathbb{R}^n \times \mathbb{R}^m  \times \mathbb{R}^{m \times d} \times \mathit{U} \to \mathbb{R}^m,\\
&\mathit{l} \left (\omega, x,y,z,u,t \right ) : \Omega\times\mathbb{R}^n \times \mathbb{R}^m
\times \mathbb{R}^{m \times d} \times \mathit{U} \times \left [ 0,T \right ] \to \mathbb{R},\\
&\gamma\left (\omega,  x \right ) : \Omega\times\mathbb{R}^m \to \mathbb{R},\\
&\Phi\left (\omega,  x \right ),\ \beta \left (\omega, x \right ) : \Omega\times\mathbb{R}^n \to \mathbb{R},\\
&\Psi\left (\omega,  x \right ) :\Omega\times\mathbb{R}^n \to \mathbb{R}^m.
\end{align*}
For notations simplicity, we omit $(t,\omega)$ in the above functions. Furthermore, we assume that $\mathit{f},\ \sigma,\ \mathit{g},\ \Psi,\ \mathit{l},\ \beta,\ \gamma$, and $\Phi$ satisfy the following assumptions.

\begin{assumption}\label{assumption_1}
$\mathit{f},\ \sigma,\ \mathit{g},\ \Psi,\ \mathit{l},\ \beta ,\ \gamma,$ and $\Phi$ are continuous in $\mathbb{R}^n \times \mathbb{R}^m
\times \mathbb{R}^{m \times d} \times \mathit{U} \times \left [ 0,T \right ]$, and continuously differentiable with respect to $\left ( x,y,z,u,t \right ) $.
\end{assumption}

\begin{assumption}\label{assumption_2}
The derivatives of $f,\ \sigma,\ g,\ \Psi$ are bounded. The derivatives of $\mathit{l} $ are bounded by $C\left ( 1+\left | x \right |+\left | y \right |+\left | z \right | +\left | u \right |    \right ) $ and the derivatives of $\beta $, $\Psi$ and $\gamma$ with respect to $x$ are bounded by $C\left ( 1+\left | x \right |  \right ) $.
\end{assumption}

\begin{assumption}\label{assumption_3}
Let $\Psi$, $\beta$ be twice differentiable at x, with its derivatives in x be uniformly continuous in x. Let  $\Phi$ be three-times differentiable at x, with its derivatives in x be uniformly continuous in x.
\end{assumption}
\begin{remark}
Since $x_0$, $\Psi$ and $\tau^u$ are deterministic, based on the results in Peng\cite{peng1993backward}, it follows from Assumptions \ref{assumption_1}, \ref{assumption_2} that there exists a unique triple
$$\left ( X^u\left ( \cdot  \right ),Y^u\left ( \cdot  \right ),Z^u \left ( \cdot  \right ) \right ) \in \mathcal{M}^2\left ( \mathbb{R}^n  \right ) \times \mathcal{M}^2\left ( \mathbb{R}^m  \right )
\times \mathcal{M}^2\left ( \mathbb{R}^{m\times d}  \right )$$
satisfying equation (\ref{2.4}). Assumption \ref{assumption_3} is a necessary condition to derive the variation of $\tau^u$ introduced in Yang \cite{yang2020varying}.
\end{remark}
The corresponded solution $\left ( X^u\left ( \cdot  \right ),Y^u\left ( \cdot  \right ),Z^u \left ( \cdot  \right ) \right )$ is called the state variable or trajectory under control $u(\cdot)$. The stochastic optimal control problem is to minimize the cost functional (\ref{cost_functional}) over admissible controls $\mathcal{U}[0,\tau ^u ]$, and the control $\bar{u}\left ( \cdot  \right )$  satisfying
\begin{equation}
\label{min_cost_functional}
    J\left ( \bar{u}\left ( \cdot  \right )   \right )
=\underset{u\left ( \cdot  \right ) \in \mathcal{U}\left [ 0,\tau ^u \right ]  }{inf}
J\left ( u\left ( \cdot  \right )   \right )
\end{equation}
is called an optimal control. The corresponded state trajectory triple $\left ( X^{\bar{u} }\left ( \cdot  \right ),Y^{\bar{u} }\left ( \cdot  \right ),Z^{\bar{u} }\left ( \cdot  \right ) \right )$ is called an optimal state trajectory and $\tau^{\bar{u}}$ is called the optimal terminal time. The main results of this paper is the following stochastic maximum principle.
\begin{theorem}\label{Theorem}
Let Assumptions \ref{assumption_1}, \ref{assumption_2} and \ref{assumption_3} hold.  Let $h^{\bar{u}}\left(\tau^{\bar{u}}\right) \neq 0$ and $h^{\bar{u}}(\cdot)$ be continuous at the point $\tau^{\bar{u}}$, where $h^{{u}}(\cdot)$ is given in Lemma \ref{lemma 3.2}. Then, there exists $(p(\cdot),k(\cdot), q(\cdot))$ satisfying adjoint equations (\ref{hami-1}), (\ref{hami-3}) and the following hold.

(i). If $\tau^{\bar{u}}<T$, we have
\begin{equation*}
\begin{aligned}
&H_u(X^{\bar{u}}(t),Y^{\bar{u}}(t),Z^{\bar{u}}(t),\bar{u}(t),p(t),k(t),q(t),t)(u-\bar{u}(t))
+\mathcal{L}(t)\frac{\bar{h}(u-\bar{u}(t), t)}{h^{\bar{u}}\left(\tau^{\bar{u}}\right)}\ge 0
\end{aligned}
\end{equation*}
where
\begin{equation*}
\begin{aligned}
H(x, y, z, u, p, k, q, t)= &(p, f(x, u))+(k, \sigma(x, u))+(q, g(x, y, z, u, t))+l(x, y, z, u, t)\\
\mathcal{L}(t)=&q(\tau^{\bar{u}})\tilde{\Psi}^{\bar{u}}\left(\tau^{\bar{u}}\right)-q(\tau^{\bar{u}})g\left(X^{\bar{u}}\left(\tau^{\bar{u}}\right), Y^{\bar{u}}\left(\tau^{\bar{u}}\right), Z^{\bar{u}}\left(\tau^{\bar{u}}\right), \bar{u}\left(\tau^{\bar{u}}\right)\right)\\
&-\tilde{\beta}^{\bar{u}}\left(\tau^{\bar{u}}\right)
-\mathbb{E}\left[l\left(X^{\bar{u}}\left(\tau^{\bar{u}}\right), Y^{\bar{u}}\left(\tau^{\bar{u}}\right), Z^{\bar{u}}\left(\tau^{\bar{u}}\right), \bar{u}\left(\tau^{\bar{u}}\right), \tau^{\bar{u}}\right)\right],
\end{aligned}
\end{equation*}
$\tilde{\Psi}^{\bar{u}}$ and $\tilde{\beta}^{\bar{u}}$ are defined in equations (\ref{Psi}) and (\ref{beta}),
for any $u \in \mathcal{U} $, a.e. $t \in\left[0, \tau^{\bar{u}}\right), P-$a.s..

(ii). If $\inf\left\{t: \mathbb{E}[\Phi(X^{\bar{u}}(t))] \geq \alpha, t \in[0, T]\right\}=T$, we have
\begin{equation*}
\begin{aligned}
&H_u(X^{\bar{u}}(t),Y^{\bar{u}}(t),Z^{\bar{u}}(t),\bar{u}(t),p(t),k(t),q(t),t)(u-\bar{u}(t))
+\mathcal{L}(t)\frac{\bar{h}(u-\bar{u}(t), t)}{h^{\bar{u}}\left(\tau^{\bar{u}}\right)}\ge 0
\end{aligned}
\end{equation*}
for any $u \in \mathcal{U} $, a.e. $t \in\left[0, \tau^{\bar{u}}\right), P-$a.s.
or
\begin{equation*}
H_u(X^{\bar{u}}(t),Y^{\bar{u}}(t),Z^{\bar{u}}(t),\bar{u}(t),p(t),k(t),q(t),t)(u(t)-\bar{u}(t))\ge 0
\end{equation*}
for any $u \in \mathcal{U} $, a.e. $t \in\left[0, \tau^{\bar{u}}\right), P-$a.s..

(iii). If $\left\{t: \mathbb{E}[\Phi(X^{\bar{u}}(t))] \geq \alpha, t \in[0, T]\right\}=\emptyset$, we have
\begin{equation*}
H_u(X^{\bar{u}}(t),Y^{\bar{u}}(t),Z^{\bar{u}}(t),\bar{u}(t),p(t),k(t),q(t),t)(u(t)-\bar{u}(t)) \ge 0
\end{equation*}
for any $u \in \mathcal{U} $, a.e. $t \in\left[0, \tau^{\bar{u}}\right), P-$a.s..
\end{theorem}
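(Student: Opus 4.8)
The plan is to use a convex perturbation of the optimal control and differentiate the cost functional at $\rho=0^+$, the essential new difficulty being that the terminal time itself moves with the perturbation parameter. Fix $u(\cdot)\in\mathcal U$, set $u^\rho(t)=\bar u(t)+\rho\bigl(u(t)-\bar u(t)\bigr)$ for $\rho\in[0,1]$ (admissible since $U$ is convex), write $(X^\rho,Y^\rho,Z^\rho)$ for the solution of \eqref{2.4} driven by $u^\rho$ and $\tau^\rho:=\tau^{u^\rho}$ for the associated terminal time. The first task is to quantify the dependence of $\tau^\rho$ on $\rho$ near $0$. In Case (i), where $\tau^{\bar u}<T$, one differentiates the defining identity $\mathbb E[\Phi(X^\rho(\tau^\rho))]=\alpha$ and invokes Lemma \ref{lemma 3.2} to obtain $\frac{d}{d\rho}\tau^\rho\big|_{\rho=0}$ in terms of $h^{\bar u}$; this is exactly where the hypotheses $h^{\bar u}(\tau^{\bar u})\neq0$ and continuity of $h^{\bar u}$ at $\tau^{\bar u}$ are used, ensuring the implicit relation is nondegenerate and the terminal-time variation is well defined.

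Next I would write down the $\rho$-moving variational equations: the first-order variation $(X^1,Y^1,Z^1)$ of $(X^\rho,Y^\rho,Z^\rho)$ solves the linear FBSDE obtained by formally differentiating \eqref{2.4}, but now posed with a terminal condition at the moving time $\tau^\rho$. Combining the terminal-time variation from the first step with a first-order Taylor expansion of the terminal data, the boundary contributions coming from $Y^\rho(\tau^\rho)=\Psi(X^\rho(\tau^\rho))$ and from $\beta(X^\rho(\tau^\rho))$ collapse into the modified terminal coefficients $\tilde\Psi^{\bar u}(\tau^{\bar u})$ and $\tilde\beta^{\bar u}(\tau^{\bar u})$ of \eqref{Psi}--\eqref{beta}. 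One then carries out the standard a priori estimates, uniform in $\rho$ under Assumptions \ref{assumption_1}--\ref{assumption_3}, to justify that $\rho^{-1}\bigl(J(u^\rho)-J(\bar u)\bigr)$ converges to $\frac{d}{d\rho}J(u^\rho)\big|_{\rho=0}$; here extra care is needed because the domain of integration $[0,\tau^\rho]$ is itself $\rho$-dependent.

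The third step is the duality computation. Introduce the adjoint triple $(p,k,q)$ solving \eqref{hami-1}, \eqref{hami-3}; apply It\^o's formula to $\langle p(t),X^1(t)\rangle+\langle q(t),Y^1(t)\rangle$ on $[0,\tau^{\bar u}]$ and add the variational expansions of $l$, $\gamma$ and $\beta$. The interior terms assemble into $\mathbb E\int_0^{\tau^{\bar u}}H_u(\cdots)\bigl(u(t)-\bar u(t)\bigr)\,dt$, while the boundary terms — those generated by the moving terminal time in $\int_0^{\tau^\rho}l\,dt$, in $\beta(X^\rho(\tau^\rho))$, and through the coupling $Y^\rho(\tau^\rho)=\Psi(X^\rho(\tau^\rho))$ — combine into $\mathbb E\bigl[\mathcal L(t)\,\frac{d}{d\rho}\tau^\rho\big|_{\rho=0}\bigr]$ with $\mathcal L$ as in the statement, and the first step rewrites $\frac{d}{d\rho}\tau^\rho\big|_{\rho=0}=\bar h(u-\bar u(t),t)/h^{\bar u}(\tau^{\bar u})$. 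Optimality of $\bar u$ then forces this sum to be $\ge0$ for every $u(\cdot)\in\mathcal U$, and a localization argument — replacing $u(\cdot)$ by $\bar u(\cdot)+\mathbf 1_A(t)(v-\bar u(t))$ for arbitrary $v\in U$ and $A\in\mathcal F_t$ and using arbitrariness of $A$ — upgrades this to the pointwise $P$-a.s. inequality of (i).

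Finally, Cases (ii) and (iii) follow by specialization. In Case (iii) the constraint set is empty, so $\tau^{\bar u}\equiv T$ is frozen, $\frac{d}{d\rho}\tau^\rho=0$, the $\mathcal L$-term disappears, and one recovers the classical fully coupled FBSDE maximum principle. In Case (ii) the infimum is attained exactly at $T$: if an admissible perturbation can still push $\tau^\rho$ strictly below $T$ the analysis of Case (i) applies and yields the inequality with the $\mathcal L$-term, whereas if no such perturbation exists the terminal time is again effectively frozen and one obtains the $\mathcal L$-free inequality. I expect the main obstacle to lie in Steps 2--3: correctly accounting for every boundary contribution produced by the moving terminal time in a fully coupled FBSDE — disentangling the variation of the state at a fixed time, the variation of the time itself, and the nonlinear terminal coupling $Y(\tau)=\Psi(X(\tau))$ — so that they assemble exactly into $\mathcal L$, and making all the $\rho$-limits uniform despite the $\rho$-dependent domain of integration.
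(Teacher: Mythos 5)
Your proposal follows essentially the same route as the paper: a convex perturbation together with the variation of the stopping time from Lemmas \ref{lemma 3.2}--\ref{lemma 3.4}, $\rho$-moving variational equations whose terminal contributions produce $\tilde{\Psi}^{\bar u}$ and $\tilde{\beta}^{\bar u}$, and a duality/It\^o computation with the adjoint triple that assembles the boundary terms into $\mathcal L$, with cases (ii)--(iii) handled by freezing the terminal time. The only organizational difference is that the paper carries out the It\^o computation at positive $\rho$ on $[0,\tau^{\bar u}\wedge\tau^{u_\rho}]$ using $\rho$-moving adjoint equations and then passes to the limit via Lemmas \ref{vari-limit} and \ref{hami-limit}, rather than working directly at $\rho=0$ on $[0,\tau^{\bar u}]$.
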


\begin{remark}
In Theorem \ref{Theorem}, we establish the maximum principle for the recursive optimal control problem with a varying terminal time, in which we calculate the variation of $\tau^{\bar{u}}$. Note that when the varying terminal time $\tau^{{u}}$ does not depend on the control $u$, our new optimal control problem becomes the one developed in \cite{peng1993backward}. Thus, Theorem \ref{Theorem} includes the maximum principle given in \cite{peng1993backward} as a particular case, where $\frac{\bar{h}(u-\bar{u}(t), t)}{h^{\bar{u}}\left(\tau^{\bar{u}}\right)}=0$.
\end{remark}

\section{Preliminary results}\label{sectionIII}
In this section, we introduce some preliminary results which are used to prove Theorem \ref{Theorem}. Note that $U$ is a convex set. Let $\bar{u}\left ( \cdot  \right )$ be a given optimal control satisfying equation (\ref{min_cost_functional}). Suppose $0<\rho<1$ and let $v(\cdot) + \bar{u}(\cdot) \in \mathcal{U} \left [ 0,T \right ].$ We define
$$u_{\rho}\left ( t \right )=\bar{u}\left ( t \right ) + \rho v\left ( t \right ) =
\left ( 1-\rho \right ) \bar{u}\left ( t \right ) + \rho \left ( v\left ( t \right )+  \bar{u} \left ( t \right ) \right ),\  t \in \left [ 0,T \right ].$$
Clearly, $u_{\rho}\left ( \cdot  \right ) \in \mathcal{U} \left [ 0,T \right ] $ and $\left ( X^{u_\rho}\left ( \cdot \right ), Y^{u_\rho}\left ( \cdot \right ),Z^{u_\rho}\left ( \cdot \right )  \right ) $ is the solution of equation (\ref{2.4}) under control $u_\rho\left ( \cdot  \right ) $.

We introduce two lemmas which are useful in the proof of Theorem \ref{Theorem}. The following Lemma comes from Yang\cite{yang2020varying} shown that $\left | \tau^{u_\rho} -\tau^{\bar{u}} \right | \to 0$ as $\rho \to 0$ under certain continuity conditions.
\begin{lemma}\label{lemma 3.2}
Let Assumptions \ref{assumption_1}, \ref{assumption_2} and \ref{assumption_3} hold.  $\tau^{\bar{u}}$ is defined in equation (\ref{varying_time}), $h^{\bar{u} }\left ( \tau^{\bar{u} } \right ) \ne 0$ and $h^{\bar{u} }\left ( \cdot  \right )$ is continuous at point $\tau^{\bar{u}}$, where
$$
h^u\left ( t \right ) =\mathbb{E}\left [ \Phi_x\left ( X^u\left ( t \right )\right )^\top
f\left ( X^u\left ( t \right ), u\left ( t \right ) \right )
+\frac{1}{2} {\textstyle \sum_{j=1}^{d} \sigma^j \left ( X^u\left ( t \right ), u\left ( t \right ) \right )^\top\Phi_{xx}\left ( X^u\left ( t \right )\right )\sigma^j \left ( X^u\left ( t \right )
u\left ( t \right ) \right )}  \right ],
$$
and $t \in [0,T]$. Therefore, we have that
\begin{equation*}
    \lim_{\rho \to 0}\left | \tau^{\bar{u}} -\tau^{u_\rho} \right | =0.
\end{equation*}
\end{lemma}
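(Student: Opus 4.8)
The plan is to pass to the deterministic functions $\phi^{u}(t):=\mathbb{E}\big[\Phi(X^{u}(t))\big]$, $t\in[0,T]$, to show that $\phi^{u_{\rho}}\to\phi^{\bar{u}}$ uniformly on $[0,T]$ as $\rho\to 0$, and then to use the hypothesis $h^{\bar{u}}(\tau^{\bar{u}})\neq 0$ to upgrade this to convergence of the first hitting times, since $\tau^{u}=\inf\{t:\phi^{u}(t)\ge\alpha\}\wedge T$.

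First I would establish the stability estimate $\mathbb{E}\big[\sup_{0\le t\le T}|X^{u_{\rho}}(t)-X^{\bar{u}}(t)|^{2}\big]\le C\rho^{2}\,\mathbb{E}\int_{0}^{T}|v(t)|^{2}\,dt\to 0$. This is standard: write the equation for $X^{u_{\rho}}-X^{\bar{u}}$, split each increment of $f$ and $\sigma$ into a state part (controlled by the Lipschitz bounds in Assumption \ref{assumption_2}) and a control part (of size $\rho|v|$), and apply the Burkholder--Davis--Gundy and Gronwall inequalities. Applying It\^o's formula to $\Phi(X^{u}(t))$ and taking expectations, the stochastic integral has zero mean, so $\phi^{u}(t)=\phi^{u}(0)+\int_{0}^{t}h^{u}(s)\,ds$; in particular each $\phi^{u}$ is continuous, and at every point where $h^{u}$ is continuous it is the derivative of $\phi^{u}$. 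Combining the stability estimate with the regularity and growth of $\Phi$ in Assumptions \ref{assumption_1}--\ref{assumption_3} and the uniform $L^{2}$ bounds on the trajectories gives $\sup_{0\le t\le T}|\phi^{u_{\rho}}(t)-\phi^{\bar{u}}(t)|\to 0$. The hypotheses $h^{\bar{u}}(\tau^{\bar{u}})\neq 0$ and continuity of $h^{\bar{u}}$ at $\tau^{\bar{u}}$ then say that $\phi^{\bar{u}}$ is strictly monotone on a neighbourhood of $\tau^{\bar{u}}$.

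The conclusion follows by the case split preceding the theorem. If $\{t\in[0,T]:\phi^{\bar{u}}(t)\ge\alpha\}=\emptyset$, compactness gives $\sup_{[0,T]}\phi^{\bar{u}}<\alpha$, hence $\phi^{u_{\rho}}<\alpha$ on $[0,T]$ for $\rho$ small and $\tau^{u_{\rho}}=T=\tau^{\bar{u}}$. If the infimum defining $\tau^{\bar{u}}$ equals $T$, then $\tau^{\bar{u}}=T$; for any $\epsilon>0$, $\sup_{[0,T-\epsilon]}\phi^{\bar{u}}<\alpha$ by compactness, so $\phi^{u_{\rho}}<\alpha$ on $[0,T-\epsilon]$ for $\rho$ small, i.e. $T-\epsilon<\tau^{u_{\rho}}\le T$. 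If $\tau^{\bar{u}}<T$, then $\phi^{\bar{u}}(\tau^{\bar{u}})=\alpha$ and $\phi^{\bar{u}}<\alpha$ on $[0,\tau^{\bar{u}})$, so the one-sided derivative at $\tau^{\bar{u}}$ is $\ge 0$ and hence $h^{\bar{u}}(\tau^{\bar{u}})>0$; by continuity, $h^{\bar{u}}>0$ and $\phi^{\bar{u}}$ is strictly increasing on some interval $(\tau^{\bar{u}}-\delta,\tau^{\bar{u}}+\delta)$. Fix $0<\epsilon<\delta$ with $\tau^{\bar{u}}+\epsilon\le T$ and put $m:=\min\{\alpha-\sup_{[0,\tau^{\bar{u}}-\epsilon]}\phi^{\bar{u}},\ \phi^{\bar{u}}(\tau^{\bar{u}}+\epsilon)-\alpha\}>0$; then for every $\rho$ with $\sup_{[0,T]}|\phi^{u_{\rho}}-\phi^{\bar{u}}|<m$ one has $\phi^{u_{\rho}}<\alpha$ on $[0,\tau^{\bar{u}}-\epsilon]$ and $\phi^{u_{\rho}}(\tau^{\bar{u}}+\epsilon)>\alpha$, so $\tau^{\bar{u}}-\epsilon<\tau^{u_{\rho}}\le\tau^{\bar{u}}+\epsilon$. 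Letting $\epsilon\downarrow 0$ completes the proof.

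The main obstacle is the interior case $\tau^{\bar{u}}<T$: without transversality, an arbitrarily small perturbation of the trajectory could turn a tangential contact of $\phi^{\bar{u}}$ with the level $\alpha$ into no contact at all, making $\tau^{u_{\rho}}$ jump to $T$; the assumption $h^{\bar{u}}(\tau^{\bar{u}})\neq 0$ together with continuity of $h^{\bar{u}}$ at $\tau^{\bar{u}}$ is exactly what rules this out. A secondary technical point is the rigorous justification of the representation $\phi^{u}(t)=\phi^{u}(0)+\int_{0}^{t}h^{u}(s)\,ds$ and of the uniform convergence $\phi^{u_{\rho}}\to\phi^{\bar{u}}$, which is where the twice- and three-times differentiability of $\Phi$ and the uniform continuity of its derivatives in Assumption \ref{assumption_3} are used.
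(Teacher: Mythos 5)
Your argument is correct, and it is essentially the intended one: the paper does not prove this lemma itself but imports it from Yang \cite{yang2020varying}, and the very definition of $h^{u}$ as the generator applied to $\Phi$, i.e.\ $h^{u}(t)=\frac{d}{dt}\mathbb{E}[\Phi(X^{u}(t))]$, points exactly to your route of uniform convergence of $\phi^{u_{\rho}}=\mathbb{E}[\Phi(X^{u_{\rho}}(\cdot))]$ plus transversality at the level $\alpha$. Your three-way case analysis matches the trichotomy the paper uses throughout (interior hitting time, boundary case, empty constraint set), and the deduction $h^{\bar{u}}(\tau^{\bar{u}})>0$ from one-sided monotonicity in the interior case is the right use of the hypothesis $h^{\bar{u}}(\tau^{\bar{u}})\neq 0$. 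The only point worth flagging is that the paper's assumptions never state a growth bound on $\Phi_{x}$ (Assumption \ref{assumption_2} covers $\beta,\Psi,\gamma$ only); you need such a bound both for the uniform convergence of $\phi^{u_{\rho}}$ and to kill the stochastic integral in It\^{o}'s formula, but it is recoverable since the uniform continuity of $\Phi_{x}$ required by Assumption \ref{assumption_3} forces at most linear growth, so this is a presentational gap in the paper rather than in your proof.
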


The following Lemma in Yang\cite{yang2020varying} developed the convergence results of $\frac{\tau^{\bar{u}}-\tau^{u_\rho}}{\rho}$ when $\rho \rightarrow 0$. For notation simplicity, we set $\bar{h}(v(t), t)=\lim _{\rho \rightarrow 0} \frac{h^{u_\rho}(t)-h^{\bar{u}}(t)}{\rho}$.

\begin{lemma}\label{lemma 3.4}
Let Assumptions \ref{assumption_1}, \ref{assumption_2} and \ref{assumption_3} hold. $\tau^{\bar{u}}$ is defined in equation (\ref{varying_time}), $h^{\bar{u} }\left ( \tau^{\bar{u} } \right ) \ne 0$ and $h^{\bar{u} }\left ( \cdot  \right )$ is continuous at point $\tau^{\bar{u}}$. We have the following results.

\noindent(i). If $\tau^{\bar{u}}<T$, one obtains
\begin{equation*}\label{3.4}
    \lim _{\rho \rightarrow 0} \frac{\tau^{\bar{u}}-\tau^{u_\rho}}{\rho}=\int_0^{\tau^{\bar{u}}} \frac{\bar{h}(v(t), t)}{h^{\bar{u}}\left(\tau^{\bar{u}}\right)} dt .
\end{equation*}
(ii). If $\inf \{t: \mathbb{E}[\Phi(X^{\bar{u}}(t))] \geq \alpha, t \in[0, T]\}=T$, then there exists sequence $\rho_n \rightarrow 0$ as $n \rightarrow+\infty$ such that
\begin{equation*}
    \lim _{n \rightarrow+\infty} \frac{\tau^{\bar{u}}-\tau^{u_{\rho_n}}}{\rho_n}=\int_0^{\tau^{\bar{u}}} \frac{\bar{h}(v(t), t)}{h^{\bar{u}}\left(\tau^{\bar{u}}\right)} dt \text { or } 0 .
\end{equation*}
(iii). If $\{t: \mathbb{E}[\Phi(X^{\bar{u}}(t))] \geq \alpha, t \in[0, T]\}=\varnothing$, we have
$$
\lim _{\rho \rightarrow 0} \frac{\tau^{\bar{u}}-\tau^{u_\rho}}{\rho}=0 .
$$
\end{lemma}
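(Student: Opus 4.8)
\quad
The plan is to reduce the lemma to a one-dimensional first-passage problem for the deterministic profile
\[
m^u(t):=\mathbb E\bigl[\Phi(X^u(t))\bigr],\qquad t\in[0,T],
\]
and to track how its first passage to the level $\alpha$ moves under the convex perturbation $u_\rho=\bar u+\rho v$. First I would record that, under Assumptions \ref{assumption_1}--\ref{assumption_3}, $m^u\in C^1([0,T])$ with $\dot m^u(t)=h^u(t)$: this is Itô's formula applied to $\Phi(X^u(\cdot))$ followed by taking expectations, the $dW$ term having zero mean and all integrability being supplied by the linear growth that follows from the bounded derivatives of Assumption \ref{assumption_2} together with the regularity of $\Phi$ in Assumption \ref{assumption_3}. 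Thus $\tau^u=\inf\{t\in[0,T]:m^u(t)\ge\alpha\}\wedge T$ with $m^u$ continuously differentiable, and $\tau^{u}$ is continuous in $\rho$ by Lemma \ref{lemma 3.2}.

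The second step is a uniform first-order expansion of the profile. Let $X_1$ solve the variational SDE along $(X^{\bar u},\bar u)$ in the direction $v$ with $X_1(0)=0$; the standard perturbation estimates give $\sup_{t\le T}\mathbb E|X^{u_\rho}(t)-X^{\bar u}(t)|^2=O(\rho^2)$ and $\sup_{t\le T}\mathbb E|X^{u_\rho}(t)-X^{\bar u}(t)-\rho X_1(t)|^2=o(\rho^2)$, and combining these with the Taylor expansion of $\Phi$ (whose second derivative is controlled by Assumption \ref{assumption_3}) yields $m^{u_\rho}(t)=m^{\bar u}(t)+\rho\,\mu_1(t)+o(\rho)$ \emph{uniformly} in $t$, with $\mu_1(t):=\mathbb E[\Phi_x(X^{\bar u}(t))^\top X_1(t)]$. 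Since moreover $m^{u_\rho}(t)-m^{\bar u}(t)=\int_0^t\bigl(h^{u_\rho}(s)-h^{\bar u}(s)\bigr)ds$ (both profiles start from the deterministic value $\Phi(x_0)$), dividing by $\rho$ and letting $\rho\to0$ identifies
\[
\mu_1(t)=\int_0^t\bar h(v(s),s)\,ds,\qquad\text{hence}\quad \mu_1(0)=0,\ \ \dot\mu_1(\cdot)=\bar h(v(\cdot),\cdot).
\]

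Third, I would exploit transversality and split into the three cases. In (i) and (ii), $\tau^{\bar u}\le T$ is a genuine first passage, so by continuity $m^{\bar u}(\tau^{\bar u})=\alpha$ while $m^{\bar u}(t)<\alpha$ for $t<\tau^{\bar u}$; since $h^{\bar u}$ is continuous at $\tau^{\bar u}$ and $h^{\bar u}(\tau^{\bar u})\ne0$, this forces $h^{\bar u}(\tau^{\bar u})>0$, so $\dot m^{\bar u}\ge c>0$ in a neighbourhood of $\tau^{\bar u}$ and, using the uniform convergence $m^{u_\rho}\to m^{\bar u}$ and Lemma \ref{lemma 3.2}, for $\rho$ small $\tau^{u_\rho}$ is the unique root of $m^{u_\rho}=\alpha$ in that neighbourhood. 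Writing $m^{u_\rho}(\tau^{u_\rho})=\alpha=m^{\bar u}(\tau^{\bar u})$ and Taylor-expanding both sides (with $m^{\bar u}\in C^1$, continuity of $\mu_1$, and the uniform first-order expansion above) gives
\[
h^{\bar u}(\tau^{\bar u})\bigl(\tau^{u_\rho}-\tau^{\bar u}\bigr)+\rho\mu_1(\tau^{\bar u})+o(\rho)+o\bigl(|\tau^{u_\rho}-\tau^{\bar u}|\bigr)=0,
\]
whence $|\tau^{u_\rho}-\tau^{\bar u}|=O(\rho)$ and $\lim_{\rho\to0}\frac{\tau^{\bar u}-\tau^{u_\rho}}{\rho}=\frac{\mu_1(\tau^{\bar u})}{h^{\bar u}(\tau^{\bar u})}=\int_0^{\tau^{\bar u}}\frac{\bar h(v(t),t)}{h^{\bar u}(\tau^{\bar u})}\,dt$, which is (i). For (iii), emptiness of $\{t:m^{\bar u}(t)\ge\alpha\}$ forces $\max_{[0,T]}m^{\bar u}<\alpha$, so for $\rho$ small $m^{u_\rho}<\alpha$ throughout $[0,T]$ and $\tau^{u_\rho}=T=\tau^{\bar u}$, hence the limit is $0$. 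For (ii), $\tau^{\bar u}=T$ with $m^{\bar u}(T)=\alpha$ and $h^{\bar u}(T)>0$; the same local analysis on $[\tau^{\bar u}-\epsilon,T]$ shows that if $\mu_1(T)=\int_0^T\bar h(v,t)\,dt>0$ the profile exceeds $\alpha$ just before $T$ and the computation above returns $\int_0^T\bar h(v,t)/h^{\bar u}(T)\,dt$, if $\mu_1(T)<0$ it never reaches $\alpha$ and $\tau^{u_\rho}=T$ for $\rho$ small (limit $0$), and if $\mu_1(T)=0$ the crossing is degenerate, so one can only extract a subsequence $\rho_n\to0$ along which the limit is one of these two values --- exactly the stated dichotomy.

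The main obstacle is the second step together with the expansion at the moving point in the third step: the remainder in $m^{u_\rho}(t)=m^{\bar u}(t)+\rho\mu_1(t)+o(\rho)$ must be shown to be $o(\rho)$ \emph{uniformly} in $t\in[0,T]$, which forces one to combine the variational SDE estimate with the control on $\Phi_{xx}$ from Assumption \ref{assumption_3}; and then, starting only from the a priori fact $\tau^{u_\rho}\to\tau^{\bar u}$ of Lemma \ref{lemma 3.2}, one must bootstrap via the transversality $h^{\bar u}(\tau^{\bar u})\ne0$ to the rate $|\tau^{u_\rho}-\tau^{\bar u}|=O(\rho)$ before the exact limit can be read off. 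The genuinely degenerate configuration $\mu_1(T)=0$ in case (ii) is the one place where no full limit --- only a subsequential one --- can be expected.
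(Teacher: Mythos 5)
The paper does not prove this lemma at all: both Lemma \ref{lemma 3.2} and Lemma \ref{lemma 3.4} are imported verbatim from Yang \cite{yang2020varying}, so there is no in-paper argument to compare against. Your proposal supplies a self-contained proof, and it is essentially correct: reducing to the deterministic profile $m^u(t)=\mathbb{E}[\Phi(X^u(t))]$ with $\dot m^u=h^u$ (It\^o plus expectation), establishing the uniform first-order expansion $m^{u_\rho}=m^{\bar u}+\rho\mu_1+o(\rho)$ via the variational SDE estimate of Peng's Lemma 4.1 together with the regularity of $\Phi$ from Assumption \ref{assumption_3}, and then running the transversality/implicit-function argument at the crossing $m^{u_\rho}(\tau^{u_\rho})=\alpha=m^{\bar u}(\tau^{\bar u})$ to bootstrap $|\tau^{u_\rho}-\tau^{\bar u}|=O(\rho)$ and read off the limit $\mu_1(\tau^{\bar u})/h^{\bar u}(\tau^{\bar u})=\int_0^{\tau^{\bar u}}\bar h(v(t),t)/h^{\bar u}(\tau^{\bar u})\,dt$. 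Your sign bookkeeping is right, the observation that first passage forces $h^{\bar u}(\tau^{\bar u})>0$ in cases (i)--(ii) is the correct use of the transversality hypothesis, and your case analysis in (ii) by the sign of $\mu_1(T)$ actually proves something slightly stronger than the stated subsequential dichotomy (when $\mu_1(T)=0$ both candidate values coincide and the full limit is $0$). Two points deserve explicit justification rather than assertion if this were written out: the passage of the limit under the integral in $\mu_1(t)=\int_0^t\bar h(v(s),s)\,ds$ needs a domination argument (the paper only defines $\bar h$ as a pointwise limit), and the uniform-in-$t$ remainder $o(\rho)$ — which you correctly flag as the main obstacle — requires combining the $L^2$ estimate $\sup_t\mathbb{E}|X^{u_\rho}(t)-X^{\bar u}(t)-\rho\xi(t)|^2=o(\rho^2)$ with the linear growth of $\Phi_x$ for uniform integrability. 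Neither is a gap in the idea, only in the write-up.
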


In the following, let $\xi(t)$ be the solution of the following variational equation:
\begin{equation}\label{equation1}
\begin{cases}
\begin{aligned}
d\xi(t)=&\left(f_x(x(t), u(t)) \xi(t)+f_v(x(t), u(t)) v(t)\right) dt\\
&+\left(\sigma_x(x(t), u(t)) \xi(t)+\sigma_v(x(t), u(t)) v(t)\right) dW(t),\\
\xi(0)=&0.
\end{aligned}
\end{cases}
\end{equation}
Let $\left(\eta^{\rho}\left(\cdot \right), \zeta^{\rho}\left(\cdot \right) \right)$ be the solution of the following backward equation
\begin{equation}\label{equation2}
\begin{cases}
\begin{aligned}
d \eta^{\rho}(t)=&\left(g_x(x(t), y(t), z(t), u(t)) \xi(t)+g_y(x(t), y(t), z(t), u(t)) \eta^{\rho}(t)\right.\\
&\left.+g_z(x(t), y(t), z(t), u(t)) \zeta^{\rho}(t)+g_v(x(t), y(t), z(t), u(t)) v(t)\right) dt \\
&+\zeta^{\rho}(t) d W(t),\\
\eta^{\rho}\left ( \tau^{u_\rho}\wedge\right.& \left.\tau^{\bar{u}}\right )
= \kappa^{\rho}(\tau^{\bar{u}}\wedge\tau^{u_\rho}),
\end{aligned}
\end{cases}
\end{equation}
where $\tau^{u_\rho}$, $\tau^{\bar{u}}$ satisfy equation (\ref{varying_time}) and $\kappa(t)$ is given as follows:

\noindent(i). If $\tau^{\bar{u}}<T$, we set
\begin{equation}
\label{condition1}
\begin{aligned}
\kappa^{\rho}\left(\tau^{\bar{u}} \wedge\tau^{u_\rho} \right)=&\tilde{\Psi}^{\bar{u}}(\tau^{\bar{u}} \wedge\tau^{u_\rho})\frac{\tau^{u_\rho}-\tau^{\bar{u}}}{\rho}+\Psi_x\left(X^{\bar{u}}\left(\tau^{\bar{u}} \wedge\tau^{u_\rho}\right)\right)^\top \xi\left(\tau^{\bar{u}} \wedge\tau^{u_\rho}\right)\\
&+g\left(X^{\bar{u}}\left(\tau^{\bar{u}} \wedge\tau^{u_\rho}\right), Y^{\bar{u}}\left(\tau^{\bar{u}} \wedge\tau^{u_\rho}\right), Z^{\bar{u}}\left(\tau^{\bar{u}} \wedge\tau^{u_\rho}\right), \bar{u}\left(\tau^{\bar{u}} \wedge\tau^{u_\rho}\right)\right) \frac{\tau^{\bar{u}}-\tau^{u_\rho}}{\rho},
\end{aligned}
\end{equation}
where
\begin{equation}\label{Psi}
\begin{aligned}
\tilde{\Psi}^{\bar{u}}\left(t\right)=&\Psi_x\left(X^{\bar{u}}\left(t\right)\right)^{\top} f\left(X^{\bar{u}}\left(t\right), \bar{u}\left(t\right)\right)
+\frac{1}{2} \sum_{j=1}^d \sigma^j\left(X^{\bar{u}}\left(t\right), \bar{u}\left(t\right)\right)^{\top} \Psi_{x x}\left(X^{\bar{u}}\left(t\right)\right) \sigma^j\left(X^{\bar{u}}\left(t\right), \bar{u}\left(t\right)\right).
\end{aligned}
\end{equation}
(ii). If $\inf \{t: \mathbb{E}[\Phi(X^{\bar{u}}(t))] \geq \alpha, t \in[0, T]\}=T$, we set
that $\kappa^{\rho}\left(\tau^{\bar{u}} \wedge\tau^{u_\rho} \right)$ satisfies equation (\ref{condition1}) or
\begin{align*}
\begin{split}
\kappa^{\rho}\left(\tau^{\bar{u}}\wedge\tau^{u_{\rho}} \right)=&\Psi_x\left(X^{\bar{u}}\left(\tau^{\bar{u}}\wedge\tau^{u_{\rho}}\right)\right) \xi\left(\tau^{\bar{u}}\wedge\tau^{u_{\rho}}\right).
\end{split}
\end{align*}
(iii). If $\{t: \mathbb{E}[\Phi(X^{\bar{u}}(t))] \geq \alpha, t \in[0, T]\}=\varnothing$, we set
\begin{align}\label{nn2-3}
\begin{split}
\kappa^{\rho}\left(\tau^{\bar{u}}\wedge\tau^{u_{\rho}} \right)=&\Psi_x\left(X^{\bar{u}}\left(\tau^{\bar{u}}\wedge\tau^{u_{\rho}}\right)\right) \xi\left(\tau^{\bar{u}}\wedge\tau^{u_{\rho}}\right).
\end{split}
\end{align}

Furthermore, let $\left(\eta\left(\cdot \right), \zeta\left(\cdot \right) \right)$ satisfy the following equation
\begin{equation}\label{equation3}
\begin{cases}
\begin{aligned}
d \eta(t)=&\left(g_x(x(t), y(t), z(t), u(t)) \xi(t)+g_y(x(t), y(t), z(t), u(t)) \eta(t)\right.\\
&\left.+g_z(x(t), y(t), z(t), u(t)) \zeta(t)+g_v(x(t), y(t), z(t), u(t)) v(t)\right) dt \\
&+\zeta(t) d W(t),\\
\eta\left (\tau^{\bar{u}}\right)
= &\kappa(\tau^{\bar{u}}),
\end{aligned}
\end{cases}
\end{equation}
where
\begin{equation}\label{kappa}
\begin{aligned}
\kappa(\tau^{\bar{u}})= & -\int_0^{\tau^{\bar{u}}} \frac{\tilde{\Psi}^{\bar{u}}(\tau^{\bar{u}}) \bar{h}(v(s), s)}{h^{\bar{u}}\left(\tau^{\bar{u}}\right)} \mathrm{d} s+\Psi_x\left(X^{\bar{u}}(\tau^{\bar{u}})\right)^{\top} \xi(\tau^{\bar{u}}) \\ & +g\left(X^{\bar{u}}(\tau^{\bar{u}}), Y^{\bar{u}}(\tau^{\bar{u}}), Z^{\bar{u}}(\tau^{\bar{u}}), \bar{u}(\tau^{\bar{u}})\right) \int_0^{\tau^{\bar{u}}} \frac{\bar{h}(v(s), s)}{h^{\bar{u}}\left(\tau^{\bar{u}}\right)} \mathrm{d} s.
\end{aligned}
\end{equation}
If $\tau^{\bar{u}}$ satisfies the second possibility in the case $(ii)$ or the case $(iii)$, then equation (\ref{kappa}) reduces to the equation (\ref{nn2-3}).

From Theorem $2.1$ in Peng\cite{peng1993backward}, there exists a unique pair
$$
(\eta^{\rho}(\cdot), \zeta^{\rho}(\cdot)) \in \mathcal{M}^2 \left(\mathbb{R}^m\right) \times \mathcal{M}^2 \left(\mathbb{R}^{m \times d}\right)
$$
which solves equations (\ref{equation1}) and (\ref{equation2}), and there exists a unique pair
$$
(\eta(\cdot), \zeta(\cdot)) \in  \mathcal{M}^2 \left(\mathbb{R}^m\right) \times \mathcal{M}^2 \left(\mathbb{R}^{m \times d}\right)
$$
satisfying equations (\ref{equation1}) and (\ref{equation3}).

Now, we prove that $\left(\eta^{\rho}\left(\cdot \right), \zeta^{\rho}\left(\cdot \right) \right)$ converges to $\left(\eta\left(\cdot \right), \zeta\left(\cdot \right) \right)$ for $t \in [0, \tau^{u_\rho}\wedge\tau^{\bar{u}}]$ when $\rho \to 0$, which is useful in the proof of Theorem \ref{Theorem}.
\begin{lemma}\label{vari-limit}
Let Assumptions \ref{assumption_1}, \ref{assumption_2} and \ref{assumption_3} hold. $\tau^{\bar{u}}$ and $\tau^{u_{\rho}}$ is defined in equation (\ref{varying_time}). Then we have
\begin{align*}
&\lim_{\rho \to 0} \mathbb{E}\left [ \left |\eta^{\rho}\left(t\right)-\eta\left(t\right) \right |^2  \right ] =0,\\
&\lim_{\rho \to 0} \mathbb{E}\left [\int_{0}^{\tau^{\bar{u}}\wedge\tau^{u_\rho}}
 \left |\zeta^{\rho}\left(t\right)-\zeta\left(t\right) \right |^2 dt \right ] =0
\end{align*}
for $t \in [0, \tau^{u_\rho} \wedge \tau^{\bar{u}}]$.
\end{lemma}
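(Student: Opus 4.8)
The plan is to exploit the fact that equations (\ref{equation2}) and (\ref{equation3}) are linear backward equations with the \emph{same} generator coefficients $g_x,g_y,g_z,g_v$ evaluated along the optimal trajectory, so the entire discrepancy between $(\eta^{\rho},\zeta^{\rho})$ and $(\eta,\zeta)$ is concentrated in the terminal horizon and the terminal value. Since $\mathbb{E}[\Phi(X^{u}(\cdot))]$ is deterministic, both $\tau^{\bar{u}}$ and $\tau^{u_\rho}$ are deterministic numbers; set $s_\rho:=\tau^{\bar{u}}\wedge\tau^{u_\rho}$, so $s_\rho\to\tau^{\bar{u}}$ by Lemma \ref{lemma 3.2}. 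Restricting the solution $(\eta(\cdot),\zeta(\cdot))$ of (\ref{equation3}) to $[0,s_\rho]$ yields a pair $(\hat{\eta}(\cdot),\hat{\zeta}(\cdot))$ that solves on $[0,s_\rho]$ exactly the backward equation satisfied by $(\eta^{\rho}(\cdot),\zeta^{\rho}(\cdot))$, but with terminal datum $\eta(s_\rho)$ instead of $\kappa^{\rho}(s_\rho)$. Hence $\delta\eta:=\eta^{\rho}-\hat{\eta}$ and $\delta\zeta:=\zeta^{\rho}-\hat{\zeta}$ solve the homogeneous linear BSDE $d(\delta\eta)(t)=\big(g_y\,\delta\eta(t)+g_z\,\delta\zeta(t)\big)dt+\delta\zeta(t)\,dW(t)$ on $[0,s_\rho]$ with $\delta\eta(s_\rho)=\kappa^{\rho}(s_\rho)-\eta(s_\rho)$, the inhomogeneous parts $g_x\xi$ and $g_v v$ cancelling.

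Next I would invoke the standard a priori estimate for linear BSDEs with bounded coefficients, which is available because Assumption \ref{assumption_2} makes $g_y,g_z$ bounded: there is a constant $C=C(T,\|g_y\|_\infty,\|g_z\|_\infty)$, independent of $\rho$, such that
$$\mathbb{E}\Big[\sup_{0\le t\le s_\rho}|\delta\eta(t)|^2\Big]+\mathbb{E}\Big[\int_0^{s_\rho}|\delta\zeta(t)|^2\,dt\Big]\le C\,\mathbb{E}\big[|\kappa^{\rho}(s_\rho)-\eta(s_\rho)|^2\big].$$
Since $\eta(t)=\hat{\eta}(t)$ and $\zeta(t)=\hat{\zeta}(t)$ for $t\in[0,s_\rho]$, both assertions of the lemma follow once I show $\mathbb{E}[|\kappa^{\rho}(s_\rho)-\eta(s_\rho)|^2]\to 0$ as $\rho\to 0$ (along a subsequence in case (ii)). Writing $\kappa^{\rho}(s_\rho)-\eta(s_\rho)=\big(\kappa^{\rho}(s_\rho)-\kappa(\tau^{\bar{u}})\big)+\big(\eta(\tau^{\bar{u}})-\eta(s_\rho)\big)$ and using $\kappa(\tau^{\bar{u}})=\eta(\tau^{\bar{u}})$ from (\ref{equation3}), the second bracket tends to $0$ in $L^2$ because $\eta(\cdot)$ has continuous paths and $\mathbb{E}[\sup_{[0,\tau^{\bar{u}}]}|\eta|^2]<\infty$, so dominated convergence applies as $s_\rho\to\tau^{\bar{u}}$.

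For the first bracket I would pass to the limit in the explicit form of $\kappa^{\rho}(s_\rho)$. In case (i), by (\ref{condition1}) it equals $\tilde{\Psi}^{\bar{u}}(s_\rho)\frac{\tau^{u_\rho}-\tau^{\bar{u}}}{\rho}+\Psi_x(X^{\bar{u}}(s_\rho))^{\top}\xi(s_\rho)+g\big(X^{\bar{u}}(s_\rho),Y^{\bar{u}}(s_\rho),Z^{\bar{u}}(s_\rho),\bar{u}(s_\rho)\big)\frac{\tau^{\bar{u}}-\tau^{u_\rho}}{\rho}$; since $X^{\bar{u}}(\cdot),Y^{\bar{u}}(\cdot),\xi(\cdot)$ have continuous paths with finite second-moment suprema, $X^{\bar{u}}(s_\rho)\to X^{\bar{u}}(\tau^{\bar{u}})$, $Y^{\bar{u}}(s_\rho)\to Y^{\bar{u}}(\tau^{\bar{u}})$, $\xi(s_\rho)\to\xi(\tau^{\bar{u}})$ in $L^2$, the continuity and controlled growth of $\Psi_x$, $\tilde{\Psi}^{\bar{u}}$ and $g$ (Assumptions \ref{assumption_1}--\ref{assumption_3}) transfer this to the composed terms, and $\frac{\tau^{u_\rho}-\tau^{\bar{u}}}{\rho}\to-\int_0^{\tau^{\bar{u}}}\frac{\bar{h}(v(s),s)}{h^{\bar{u}}(\tau^{\bar{u}})}\,ds$ by Lemma \ref{lemma 3.4}; comparing with (\ref{kappa}) gives $\kappa^{\rho}(s_\rho)\to\kappa(\tau^{\bar{u}})$ in $L^2$. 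Case (iii) is identical with the ratio term absent, and case (ii) is identical except that one argues along the subsequence $\rho_n$ of Lemma \ref{lemma 3.4} (or, under its second option, $\kappa^{\rho}(s_\rho)=\Psi_x(X^{\bar{u}}(s_\rho))\xi(s_\rho)$ and the limit is immediate). The main obstacle is precisely this last step: controlling $\kappa^{\rho}(s_\rho)-\eta(s_\rho)$ in $L^2$ simultaneously requires path-continuity and uniform $L^2$-bounds for $X^{\bar{u}},Y^{\bar{u}},\xi,\eta$ at the \emph{moving} endpoint $s_\rho$, the delicate ratio limit of Lemma \ref{lemma 3.4} together with its subsequence caveat in case (ii), and enough care with the growth bounds in Assumption \ref{assumption_2} to keep the composed terms uniformly integrable; once this convergence is secured, the $\rho$-independent BSDE stability estimate closes the argument.
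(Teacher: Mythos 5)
Your proposal is correct and follows essentially the same route as the paper: after the inhomogeneous terms $g_x\xi$ and $g_v v$ cancel, the difference pair solves a homogeneous linear BSDE on $[0,\tau^{\bar u}\wedge\tau^{u_\rho}]$ whose terminal datum is exactly your $\kappa^{\rho}(s_\rho)-\eta(s_\rho)$ (the paper writes the $\eta(\tau^{\bar u})-\eta(s_\rho)$ piece in integral form and bounds it via Burkholder--Davis--Gundy rather than via path continuity, but this is the same estimate), and the conclusion then follows from the standard stability of BSDEs in the terminal value together with Lemmas \ref{lemma 3.2} and \ref{lemma 3.4} and dominated convergence for the convergence $\kappa^{\rho}(s_\rho)\to\kappa(\tau^{\bar u})$ in $L^2$.
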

\begin{proof}
Let $\tilde{\eta}\left(t\right)=\eta^{\rho}\left(t\right)-\eta\left(t\right)$, $\tilde{\zeta}\left(t\right)=\zeta^{\rho}\left(t\right)-\zeta\left(t\right)$ and
$\hat{\kappa}=\kappa^{\rho}(\tau^{\bar{u}}\wedge\tau^{u_\rho})-\kappa(\tau^{\bar{u}})$. By a simple calculation, we have
\begin{equation}\label{rho_equation}
\begin{cases}
\begin{aligned}
&d\tilde{\eta}\left(t\right) = \left(g_y(x(t), y(t), z(t), u(t))\tilde{\eta}(t)+g_z(x(t), y(t), z(t), u(t))\tilde{\zeta}(t)\right) dt+\tilde{\zeta}(t)dW(t), \\
&\tilde{\eta}\left(\tau^{\bar{u}}\wedge\tau^{u_\rho}\right)=\hat{\kappa}+\int_{\tau^{u_\rho} \wedge \tau^{\bar{u}}}^{\tau^{\bar{u}}}\left(g_x \xi(t)+g_y \eta(t)+g_z \zeta(t)+g_v v(t)\right) d t+\int_{\tau^{u_\rho} \wedge \tau^{\bar{u}}}^{\tau^{\bar{u}}} \zeta(t) d W(t).
\end{aligned}
\end{cases}
\end{equation}
For $\left|\tilde{\eta}(\tau^{u_\rho}\wedge \tau^{\bar{u}})\right|^2$, by Burkholder-Davis-Gundy inequality, it follows that
\begin{equation*}
\begin{aligned}
& \mathbb{E}\left|\tilde{\eta}\left(\tau^{u_\rho} \wedge \tau^{\bar{u}}\right)\right|^2 \\
= & \mathbb{E}\left|\hat{\kappa}+\int_{\tau^{u_\rho} \wedge \tau^{\bar{u}}}^{\tau^{\bar{u}}}\left(g_x \xi(t)+g_y \eta(t)+g_z \zeta(t)+g_v v(t)\right) d t+\int_{\tau^{u_\rho} \wedge \tau^{\bar{u}}}^{\tau^{\bar{u}}} \zeta(t) d W(t)\right|^2 \\
\leq & K\mathbb{E}|\hat{\kappa}|^2+K \mathbb{E} \int_{\tau^{u_\rho}\wedge \tau^{\bar{u}}}^{\tau^{\bar{u}}}\left[\left|g_x \xi(t)\right|^2+\left|g_y \eta(t)\right|^2+\left|g_z \zeta(t)\right|^2+\left|g_v v(t)\right|^2\right] d t+K \mathbb{E} \int_{\tau^{u_\rho} \wedge \tau^{\bar{u}}}^{\tau^{\bar{u}}}|\zeta(t)|^2 d t
\end{aligned}
\end{equation*}
where $K$ is a positive constant. Based on Assumptions \ref{assumption_1}, \ref{assumption_2} and \ref{assumption_3}, by Lemma \ref{lemma 3.2}, Lemma \ref{lemma 3.4} and dominated convergence theorem, we can obtain that $\lim_{\rho\to0}\mathbb{E}|\hat{\kappa}|^2=0$. From Lemma \ref{lemma 3.2}, we have $\lim_{\rho\to 0}\left|\tau^{u_\rho}-\tau^{\bar{u}}\right|=0$. Thus, by dominated convergence theorem, when $\rho \to 0$, we can obtain that
$$
\mathbb{E} \left|\tilde{\eta}(\tau^{u_\rho}\wedge \tau^{\bar{u}})\right|^2 \to 0.
$$
Based on the continuous dependence of BSDEs on terminal value $\tilde{\eta}(\tau^{u_\rho}\wedge \tau^{\bar{u}})$, it follows that
\begin{equation*}
\begin{aligned}
&\mathbb{E} \left|\tilde{\eta}(t)\right|^2 \to 0, \quad \mathbb{E} \int_t^{\tau^{u_\rho}\wedge \tau^{\bar{u}}}\left|\tilde{\zeta}(s)\right|^2 ds \to 0
\end{aligned}
\end{equation*}
when $\rho \to 0$ and t $\in [0, \tau^{u_\rho} \wedge \tau^{\bar{u}}]$.

This completes the proof.
\end{proof}

\begin{remark}
The terminal time varies according to $\rho$ and $\tau^{u_{\rho}}$ should be smaller or larger than $\tau^{\bar{u}}$ which makes it difficult to determine the terminal time of the related variational equations directly. Therefore, we propose a novel $\rho$-moving variational equations and prove that its solutions can converge to the solutions of equation (\ref{equation3}) for $t\in [0, \tau^{u_\rho} \wedge \tau^{\bar{u}}]$ as $\rho \to 0$.
\end{remark}
Let
$\bar{u}\left(\cdot \right) $ be an optimal control and $\left(X^{\bar{u}}\left(\cdot \right), Y^{\bar{u}}\left(\cdot \right), Z^{\bar{u}}\left(\cdot \right) \right)$ be the corresponding trajectory. Similarly, we denote by $\left(X^{u_\rho}(\cdot), Y^{u_\rho}(\cdot), Z^{u_\rho}(\cdot)\right)$ the trajectory corresponding to $u_\rho$. In the following, we set
$$
\begin{aligned}
\tilde{X}_\rho(t) \equiv \rho^{-1}\left(X^{u_\rho}(t)-X^{\bar{u}}(t)\right)-\xi(t)
\end{aligned}
$$
where $t \in \left [ 0,\tau^{\bar{u}}\right]$, and
$$
\begin{aligned}
&\tilde{Y}_\rho(t) \equiv \rho^{-1}\left(Y^{u_\rho}(t)-Y^{\bar{u}}(t)\right)-\eta^{\rho}(t), \\
&\tilde{Z}_\rho(t) \equiv \rho^{-1}\left(Z^{u_\rho}(t)-Z^{\bar{u}}(t)\right)-\zeta^{\rho}(t),
\end{aligned}
$$
where $t \in \left [ 0,\tau^{u_\rho} \wedge\tau^{\bar{u}} \right]$. We have the following convergence results.
\begin{lemma}\label{equation_lemma}
Let Assumptions \ref{assumption_1}, \ref{assumption_2} and \ref{assumption_3} hold. $\tau^{\bar{u}}$ is defined in equation (\ref{varying_time}), let $h^{\bar{u} }\left ( \tau^{\bar{u} } \right ) \ne 0$ and $h^{\bar{u} }\left ( \cdot  \right )$ be continuous at point $\tau^{\bar{u}}$. Therefore, we have
\begin{align*}
&\lim _{\rho \rightarrow 0} \sup _{0 \leq t \leq \tau^{\bar{u}}} \mathbb{E} \left|\tilde{X}_\rho(t)\right|^2=0, \\
&\lim _{\rho \rightarrow 0} \sup _{0 \leq t \leq \tau^{u_\rho} \wedge \tau^{\bar{u}}} \mathbb{E} \left|\tilde{Y}_\rho(t)\right|^2=0, \\
&\lim _{\rho \rightarrow 0} \mathbb{E}  \int_0^{\tau^{u_\rho} \wedge\tau^{\bar{u}}}\left|\tilde{Z}_\rho(t)\right|^2=0.
\end{align*}
\end{lemma}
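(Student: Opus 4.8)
The plan is to prove the three limits in turn: first the forward variation $\tilde{X}_\rho$ on the fixed interval $[0,\tau^{\bar{u}}]$, then the backward pair on the moving interval $[0,\theta_\rho]$ with $\theta_\rho:=\tau^{u_\rho}\wedge\tau^{\bar{u}}$. Throughout I use that $\tau^{u_\rho},\tau^{\bar{u}}$ are deterministic and that, since $\|u_\rho-\bar{u}\|_{\mathcal{M}^2}^2=\rho^2\|v\|_{\mathcal{M}^2}^2$, the standard stability estimate for (\ref{2.4}) gives $\sup_{0\le t\le T}\mathbb{E}|X^{u_\rho}(t)-X^{\bar{u}}(t)|^2\le C\rho^2$, as well as $\sup_t\mathbb{E}|\xi(t)|^2<\infty$ and $\sup_t\mathbb{E}|\eta(t)|^2<\infty$.

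\textbf{Step 1 (forward variation).} Subtracting the forward equations of (\ref{2.4}) under $u_\rho$ and $\bar{u}$, dividing by $\rho$ and subtracting (\ref{equation1}), one obtains a linear SDE $d\tilde{X}_\rho(t)=\big(f_x(X^{\bar{u}}(t),\bar{u}(t))\tilde{X}_\rho(t)+A_\rho(t)\big)dt+\big(\sigma_x(X^{\bar{u}}(t),\bar{u}(t))\tilde{X}_\rho(t)+B_\rho(t)\big)dW(t)$, $\tilde{X}_\rho(0)=0$, where $A_\rho,B_\rho$ are the first-order Taylor remainders — differences of $f,\sigma$ and their $x$-derivatives along $X^{\bar{u}}+\lambda(X^{u_\rho}-X^{\bar{u}})$ versus along $X^{\bar{u}}$, multiplied by $\tilde{X}_\rho+\xi$ or by $v$. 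By Assumptions \ref{assumption_1} and \ref{assumption_2} (continuity and boundedness of the derivatives) and dominated convergence, $\mathbb{E}\int_0^{\tau^{\bar{u}}}(|A_\rho|^2+|B_\rho|^2)\,dt\to0$; Itô's formula on $|\tilde{X}_\rho|^2$, the Burkholder-Davis-Gundy inequality and Gronwall's lemma then give $\sup_{0\le t\le\tau^{\bar{u}}}\mathbb{E}|\tilde{X}_\rho(t)|^2\to0$.

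\textbf{Step 2 (backward variation, reduction to the terminal value).} On $[0,\theta_\rho]$ both controlled backward equations of (\ref{2.4}) hold, so subtracting the dynamics of $\rho^{-1}(Y^{u_\rho}-Y^{\bar{u}})$ and of $\eta^\rho$ (equation (\ref{equation2})) and Taylor-expanding $g$ yields, on $[0,\theta_\rho]$, $d\tilde{Y}_\rho(t)=\big(\int_0^1(g_x^\lambda\tilde{X}_\rho+g_y^\lambda\tilde{Y}_\rho+g_z^\lambda\tilde{Z}_\rho)\,d\lambda+\delta_\rho(t)\big)dt+\tilde{Z}_\rho(t)\,dW(t)$, where $\delta_\rho$ gathers the differences of $g_x,g_y,g_z,g_v$ along the interpolated trajectory versus the optimal one, multiplied respectively by $\xi,\eta^\rho,\zeta^\rho,v$. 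By Assumption \ref{assumption_1}, the $L^2$-convergences $\eta^\rho\to\eta$ and $\zeta^\rho\to\zeta$ of Lemma \ref{vari-limit}, and dominated convergence, $\mathbb{E}\int_0^{\theta_\rho}|\delta_\rho(t)|^2\,dt\to0$. Since $g_y,g_z$ are bounded, the a priori estimate for linear BSDEs gives, with $C$ uniform in $\rho$, $\mathbb{E}\sup_{0\le t\le\theta_\rho}|\tilde{Y}_\rho(t)|^2+\mathbb{E}\int_0^{\theta_\rho}|\tilde{Z}_\rho(t)|^2\,dt\le C\big(\mathbb{E}|\tilde{Y}_\rho(\theta_\rho)|^2+\mathbb{E}\int_0^{\theta_\rho}|\tilde{X}_\rho|^2\,dt+\mathbb{E}\int_0^{\theta_\rho}|\delta_\rho|^2\,dt\big)$; by Step 1 and the above the last two terms vanish, so it remains to show $\mathbb{E}|\tilde{Y}_\rho(\theta_\rho)|^2\to0$.

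\textbf{Step 3 (the terminal value -- main obstacle).} Take for definiteness $\tau^{u_\rho}<\tau^{\bar{u}}$ (the opposite case is symmetric, unrolling the equation for $Y^{u_\rho}$; $\tau^{u_\rho}=\tau^{\bar{u}}$ is immediate). Then $\theta_\rho=\tau^{u_\rho}$, $Y^{u_\rho}(\theta_\rho)=\Psi(X^{u_\rho}(\tau^{u_\rho}))$, while $Y^{\bar{u}}(\tau^{u_\rho})=\mathbb{E}[\Psi(X^{\bar{u}}(\tau^{\bar{u}}))-\int_{\tau^{u_\rho}}^{\tau^{\bar{u}}}g^{\bar{u}}\,ds\mid\mathcal{F}_{\tau^{u_\rho}}]$; applying Itô's formula to $\Psi(X^{\bar{u}}(\cdot))$ on $[\tau^{u_\rho},\tau^{\bar{u}}]$ (its drift being $\tilde{\Psi}^{\bar{u}}$ by (\ref{Psi})) and conditioning on $\mathcal{F}_{\tau^{u_\rho}}$, which annihilates the resulting stochastic integral, one gets $\rho^{-1}(Y^{u_\rho}(\theta_\rho)-Y^{\bar{u}}(\theta_\rho))=\rho^{-1}(\Psi(X^{u_\rho}(\tau^{u_\rho}))-\Psi(X^{\bar{u}}(\tau^{u_\rho})))+\rho^{-1}\mathbb{E}[\int_{\tau^{u_\rho}}^{\tau^{\bar{u}}}(g^{\bar{u}}-\tilde{\Psi}^{\bar{u}})(s)\,ds\mid\mathcal{F}_{\tau^{u_\rho}}]$. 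Subtracting $\kappa^\rho(\theta_\rho)$ from (\ref{condition1}), the first summand minus $\Psi_x(X^{\bar{u}}(\tau^{u_\rho}))^\top\xi(\tau^{u_\rho})$ converges to $0$ in $L^2$ by a first-order Taylor expansion of $\Psi$, $\tilde{X}_\rho(\theta_\rho)\to0$ (Step 1), the $L^2$-continuity of $\xi(\cdot)$ and $X^{\bar{u}}(\cdot)$ in $t$, Lemma \ref{lemma 3.2} and the continuity of $\Psi_x$; the remaining error equals $\rho^{-1}\mathbb{E}[\int_{\tau^{u_\rho}}^{\tau^{\bar{u}}}\{(g^{\bar{u}}-\tilde{\Psi}^{\bar{u}})(s)-(g^{\bar{u}}-\tilde{\Psi}^{\bar{u}})(\tau^{u_\rho})\}\,ds\mid\mathcal{F}_{\tau^{u_\rho}}]$, whose squared $L^2$-norm is at most $\rho^{-2}(\tau^{\bar{u}}-\tau^{u_\rho})\,\mathbb{E}\int_{\tau^{u_\rho}}^{\tau^{\bar{u}}}|(g^{\bar{u}}-\tilde{\Psi}^{\bar{u}})(s)-(g^{\bar{u}}-\tilde{\Psi}^{\bar{u}})(\tau^{u_\rho})|^2\,ds$. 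Since $(\tau^{\bar{u}}-\tau^{u_\rho})/\rho$ stays bounded by Lemma \ref{lemma 3.4}, the delicate point — and where I expect the real work — is to show this vanishes, i.e. the mean-square continuity of $g^{\bar{u}}(\cdot)-\tilde{\Psi}^{\bar{u}}(\cdot)$ at $\tau^{\bar{u}}$; here one invokes the hypothesis that $h^{\bar{u}}(\cdot)$ is continuous at $\tau^{\bar{u}}$ and Assumption \ref{assumption_3}, the latter giving the regularity of $Z^{\bar{u}}$ near $\tau^{\bar{u}}$ through the decoupled structure of (\ref{2.4}). In cases $(ii)$ and $(iii)$ the estimate is only easier: either the same $\kappa^\rho$ is used with $(\tau^{\bar{u}}-\tau^{u_\rho})/\rho$ still bounded, or $\kappa^\rho(\theta_\rho)=\Psi_x(X^{\bar{u}}(\theta_\rho))^\top\xi(\theta_\rho)$ with $(\tau^{\bar{u}}-\tau^{u_\rho})/\rho\to0$. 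Once $\mathbb{E}|\tilde{Y}_\rho(\theta_\rho)|^2\to0$ is established, the a priori estimate of Step 2 delivers all three stated limits.
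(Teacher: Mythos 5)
Your proposal is correct and follows essentially the same route as the paper: the forward part is Peng's Lemma 4.1 argument (Taylor remainder plus Gronwall), the backward pair is reduced via the linear BSDE a priori estimate to the terminal value $\mathbb{E}|\tilde{Y}_\rho(\tau^{u_\rho}\wedge\tau^{\bar u})|^2$, and that terminal value is handled exactly as in the paper by unrolling the longer-lived $Y$ over the gap interval, applying It\^o's formula to $\Psi(X(\cdot))$ there, and matching the resulting drift and $g$-terms against $\kappa^\rho$ from (\ref{condition1}). The "delicate point" you flag — the mean-square continuity of $g^{\bar u}-\tilde\Psi^{\bar u}$ near $\tau^{\bar u}$ combined with the boundedness of $(\tau^{\bar u}-\tau^{u_\rho})/\rho$ — is precisely what the paper disposes of by citing Lemmas 3 and 5 of Yang and Lemma \ref{lemma 3.4}, so your treatment is at the same level of detail as the published argument.
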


\begin{proof}
Based on Lemma 4.1 in Peng\cite{peng1993backward}, we can obtain the convergence of $\tilde{X}_\rho(\cdot)$. In the following, we prove the convergence of $\tilde{Y}_\rho(\cdot)$ and $\tilde{Z}_\rho(\cdot)$. Let
\begin{align*}
&Y^{u_\rho} (t)=\Psi\left(X^{u_\rho}\left(\tau^{u_\rho}\right)\right)-\int_t^{\tau^{u_\rho}} g\left(X^{u_\rho}(s), Y^{u_\rho}(s), Z^{u_\rho}(s), u_{\rho}(s)\right) ds-\int_t^{\tau^{u_\rho}} Z^{u_\rho}(s) dW(s), \\
&Y^{\bar{u}} (t)=\Psi\left(X^{\bar{u}}\left(\tau^{\bar{u}}\right)\right)-\int_t^{\tau^{\bar{u}}} g\left(X^{\bar{u}}(s), Y^{\bar{u}}(s), Z^{\bar{u}}(s), \bar{u}(s)\right) ds-\int_t^{\tau^{\bar{u}}} Z^{\bar{u}}(s) dW(s).
\end{align*}
Note that
\begin{equation*}
\begin{cases}
\begin{aligned}
d\tilde{Y}_{\rho}\left(t\right)=&\rho^{-1}\left[g\left(X^{\bar{u}}+\rho\left(\xi+\tilde{X}_\rho\right),
Y^{\bar{u}}+\rho\left(\eta^{\rho}+\tilde{Y}_\rho\right), Z^{\bar{u}}+\rho\left(\zeta^{\rho}+\tilde{Z}_\rho\right),\bar{u}+\rho v, t\right)\right]  \\
&\left.-g(X^{\bar{u}}, Y^{\bar{u}}, Z^{\bar{u}}, \bar{u}, t)-g_x \xi-g_y \eta^{\rho}-g_z \zeta^{\rho}-g_v v\right] d t+\tilde{Z}(t)dW(t)\\
\tilde{Y}_{\rho}\left(\tau^{u_\rho}\wedge\right. &  \left.\tau^{\bar{u}}\right)= \rho^{-1}\left(Y^{u_\rho}(\tau^{u_\rho}\wedge \tau^{\bar{u}})-Y^{\bar{u}}(\tau^{u_\rho}\wedge \tau^{\bar{u}})\right)-\eta^{\rho}(\tau^{u_\rho}\wedge \tau^{\bar{u}})
\end{aligned}
\end{cases}
\end{equation*}
can be rewritten as
\begin{equation*}
\left\{
\begin{aligned}
d\tilde{Y}_{\rho}\left(t\right)=&
\left (A_{\rho}\left(t\right)\tilde{X}_{\rho}\left(t\right)
+ B_{\rho}\left(t\right)\tilde{Y}_{\rho}\left(t\right)+
C_{\rho}\left(t\right)\tilde{Z}_{\rho}\left(t\right)
+D_\rho\left ( t \right )  \right ) dt+
\tilde{Z}_{\rho}\left(t\right)dW(t),\\
\tilde{Y}_{\rho}\left(\tau^{u_\rho}\wedge\right. & \left.\tau^{\bar{u}}\right)= \rho^{-1}\left(Y^{u_\rho}(\tau^{u_\rho}\wedge \tau^{\bar{u}})-Y^{\bar{u}}(\tau^{u_\rho}\wedge \tau^{\bar{u}})\right)-\kappa^{\rho}(\tau^{\bar{u}}\wedge\tau^{u_\rho})
\end{aligned}
\right.
\end{equation*}
where
\begin{align*}
A_\rho=&\int_0^1\left[g_x(X^{\bar{u}}+\lambda \rho\left(\xi+\tilde{X}_\rho\right),
Y^{\bar{u}}+\lambda \rho\left(\eta^\rho+\tilde{Y}_\rho\right),
Z^{\bar{u}}+\lambda \rho\left(\zeta^\rho+\tilde{Z}_\rho\right),
\bar{u}+\lambda \rho v, t)\right] d \lambda,\\
B_\rho=&\int_0^1\left[g_y(X^{\bar{u}}+\lambda \rho\left(\xi+\tilde{X}_\rho\right),
Y^{\bar{u}}+\lambda \rho\left(\eta^\rho+\tilde{Y}_\rho\right),
Z^{\bar{u}}+\lambda \rho\left(\zeta^\rho+\tilde{Z}_\rho\right),
\bar{u}+\lambda \rho v, t)\right] d \lambda,\\
C_\rho=&\int_0^1\left[g_z(X^{\bar{u}}+\lambda \rho\left(\xi+\tilde{X}_\rho\right),
Y^{\bar{u}}+\lambda \rho\left(\eta^\rho+\tilde{Y}_\rho\right),
Z^{\bar{u}}+\lambda \rho\left(\zeta^\rho+\tilde{Z}_\rho\right),
\bar{u}+\lambda \rho v, t)\right] d \lambda,\\
D_\rho=&\left(A_\rho(t)-g_x\left(X^{\bar{u}}, Y^{\bar{u}}, Z^{\bar{u}}, \bar{u}, t\right)\right)\xi
+\left(B_\rho(t)-g_y\left(X^{\bar{u}}, Y^{\bar{u}}, Z^{\bar{u}}, \bar{u}, t\right)\right)\eta^\rho\\
&+\left(C_\rho(t)-g_z\left(X^{\bar{u}}, Y^{\bar{u}}, Z^{\bar{u}}, \bar{u}, t\right)\right)\zeta^\rho+ \int_0^1\left[g_v(X^{\bar{u}}+\lambda \rho\left(\xi+\tilde{X}_\rho\right),
Y^{\bar{u}}+\lambda \rho\left(\eta^\rho+\tilde{Y}_\rho\right),\right.\\
&\left.Z^{\bar{u}}+\lambda \rho\left(\zeta^\rho+\tilde{Z}_\rho\right),
\bar{u}+\lambda \rho v, t)-g_v\left(X^{\bar{u}}, Y^{\bar{u}}, Z^{\bar{u}}, \bar{u}, t\right) \right] v d\lambda.
\end{align*}

In the following, we consider three kinds of case of $\tau^{\bar{u}}$ which is given in Lemma \ref{lemma 3.4}. \textbf{Firstly, we prove the case $(i)$}: $\inf \left\{t: \mathbb{E}[\Phi(X^{\bar{u}}(t))] \geq \alpha, t \in[0, T]\right\}<T$. Applying It\^{o}'s formula to $\left | \tilde{Y}_\rho\left ( \cdot  \right )  \right | ^2$, it follows that
\begin{align}
&\mathbb{E} \left|\tilde{Y}_\rho(t)\right|^2
+\mathbb{E} \int_t^{\tau^{u_\rho}\wedge \tau^{\bar{u}}}\left|\tilde{Z}_\rho(s)\right|^2 ds \label{grownwall1}\\
=&\mathbb{E} \left|\tilde{Y}_\rho(\tau^{u_\rho}\wedge \tau^{\bar{u}})\right|^2
-2 E \int_t^{\tau^{u_\rho}\wedge \tau^{\bar{u}}}\left(\tilde{Y}_\rho(s), A_\rho(s) \tilde{X}_\rho(s)+B_\rho(s) \tilde{Y}_\rho(s)+C_\rho(s) \tilde{Z}_\rho(s)+D_\rho(s)\right) ds \nonumber\\
\le& \mathbb{E} \left|\tilde{Y}_\rho(\tau^{u_\rho}\wedge \tau^{\bar{u}})\right|^2
+K \mathbb{E} \int_t^{\tau^{u_\rho}\wedge \tau^{\bar{u}}}\left|\tilde{Y}_\rho(s)\right|^2 d s+2^{-1} E \int_t^{\tau^{u_\rho}\wedge \tau^{\bar{u}}}\left|\tilde{Z}_\rho(s)\right|^2 d s+J_\rho \nonumber
\end{align}
where
$$
J_\rho=\mathbb{E} \int_t^{\tau^{u_\rho}\wedge \tau^{\bar{u}}}\left(\left|A_\rho(s) \tilde{X}_\rho(s)\right|^2+\left|D_\rho(s)\right|^2\right) ds.
$$

To distinguish the term $\tau^{\bar{u}}$ and $\tau^{u_\rho}$, we consider two different situations. Suppose $\tau^{\bar{u}} < \tau^{u_\rho}$. For the item $\mathbb{E}\left|\tilde{Y}_\rho(\tau^{\bar{u}} \wedge \tau^{u_\rho} )\right|^2$, given that $\tilde{Y}_\rho(\tau^{\bar{u}} \wedge \tau^{u_\rho})$ is $\mathcal{F}_{\tau^{\bar{u}} \wedge \tau^{u_\rho}}$-measurable, we have
\begin{align}
&\mathbb{E}\left|\tilde{Y}_\rho(\tau^{\bar{u}} \wedge \tau^{u_\rho})\right|^2=
\mathbb{E}\left|\mathbb{E}[\tilde{Y}_\rho(\tau^{\bar{u}} \wedge \tau^{u_\rho})\mid\mathcal{F}_{\tau^{\bar{u}}\wedge\tau^{u_\rho}}]\right|^2 \label{var1}\\
=&\mathbb{E}\left[\mathbb{E}\left[\frac{1}{\rho}\left ( Y^{u_\rho}\left ( \tau^{\bar{u}} \right ) -Y^{\bar{u}}\left ( \tau^{\bar{u}} \right )  \right )
-\kappa^{\rho}(\tau^{\bar{u}}\wedge\tau^{u_\rho})\mid\mathcal{F}_{\tau^{\bar{u}}}\right]
\right]^2 \nonumber \\
=&\mathbb{E}\left[ \mathbb{E}\left[
\frac{1}{\rho}\left(\Psi(X^{u_\rho}\left ( \tau^{u_\rho} \right )) -\Psi(X^{\bar{u}}\left ( \tau^{\bar{u}} \right ))  \right )
-\frac{1}{\rho}\int_{\tau^{\bar{u}}}^{\tau^{u_\rho}}
g\left (X^{u_\rho}\left (s \right ),Y^{u_\rho}\left (s \right ),Z^{u_\rho}\left (s \right ),u_\rho\left (s \right )\right ) ds\right.\right.\nonumber \\
&\left.\left.-\frac{1}{\rho}\int_{\tau^{\bar{u}}}^{\tau^{u_\rho}}Z^{u_\rho}\left (s \right )dW(s)-\kappa^{\rho}(\tau^{\bar{u}}\wedge\tau^{u_\rho})\mid\mathcal{F}_{\tau{\bar{u}}}\right]
\right]^2\nonumber \\
=&\mathbb{E}\left[\mathbb{E}\left[
\frac{1}{\rho}\left(\Psi(X^{u_\rho}\left ( \tau^{u_\rho} \right )) -\Psi(X^{u_\rho}\left ( \tau^{\bar{u}} \right ))  \right ) +\frac{1}{\rho}\left(\Psi(X^{u_\rho}\left ( \tau^{\bar{u}} \right )) -\Psi(X^{\bar{u}}\left ( \tau^{\bar{u}} \right ))  \right )\right.\right. \nonumber \\
&\left.\left.-\frac{1}{\rho}\int_{\tau^{\bar{u}}}^{\tau^{u_\rho}}
g\left (X^{u_\rho}\left (s \right ),Y^{u_\rho}\left (s \right ),Z^{u_\rho}\left (s \right ),u_\rho\left (s \right )\right ) ds-\kappa^{\rho}(\tau^{\bar{u}}\wedge\tau^{u_\rho})\mid\mathcal{F}_{\tau^{\bar{u}}}\right]
\right]^2.\nonumber
\end{align}
Applying $It\hat{o}$'s formula to $\Psi\left(X^{u_\rho}(t)\right)$, we can obtain that
\begin{align}
\Psi(X^{u_\rho}\left ( \tau^{u_\rho} \right )) -\Psi(X^{u_\rho}\left ( \tau^{\bar{u}} \right ))=&
\int_{\tau^{\bar{u}}}^{\tau^{u_\rho}}\Big[
\Psi_x(X^{u_\rho}(t))^\top f(X^{u_\rho}(t), u_\rho(t))\Big.\label{ito1}\\
&\Big.+\frac{1}{2}
\sum_{j=1}^{d}\sigma^j(X^{u_\rho}(t), u_\rho(t))^\top \Psi_{xx}(X^{u_\rho}(t))\sigma^j(X^{u_\rho}(t), u_\rho(t))\Big]dt\nonumber\\
&+\int_{\tau^{\bar{u}}}^{\tau^{u_\rho}}\Psi_x(X^{u_\rho}(t))^\top \sigma(X^{u_\rho}(t), u_\rho(t))dW(t),\nonumber
\end{align}
and we denote that
$$
\tilde{\Psi}^{u_\rho}(t)=\Psi_x(X^{u_\rho}(t))^\top f(X^{u_\rho}(t), u_\rho(t))
+\frac{1}{2}
\sum_{j=1}^{d}\sigma^j(X^{u_\rho}(t), u_\rho(t))^\top \Psi_{xx}(X^{u_\rho}(t))\sigma^j(X^{u_\rho}(t), u_\rho(t)).
$$
Applying equation (\ref{ito1}) and Jensen's inequality to formula (\ref{var1}), we have
\begin{equation*}
\begin{aligned}
\mathbb{E}\left|\tilde{Y}_\rho(\tau^{\bar{u}} \wedge \tau^{u_\rho})\right|^2\le
\mathbb{E}\Big[\Big(&\frac{1}{\rho}\int_{\tau^{\bar{u}}}^{\tau^{u_\rho}}\tilde{\Psi}^{u_\rho}(t)dt
+\frac{1}{\rho}\left(\Psi(X^{u_\rho}\left ( \tau^{\bar{u}} \right )) -\Psi(X^{\bar{u}}\left ( \tau^{\bar{u}} \right ))  \right )\Big.\Big.\\
&\Big.\Big.-\frac{1}{\rho}\int_{\tau^{\bar{u}}}^{\tau^{u_\rho}}
g\left (X^{u_\rho}\left (s \right ),Y^{u_\rho}\left (s \right ),Z^{u_\rho}\left (s \right ),u_\rho\left (s \right )\right ) ds-\kappa^{\rho}(\tau^{\bar{u}}\wedge\tau^{u_\rho})\Big)^2\Big].
\end{aligned}
\end{equation*}
By Lemma 3 in Yang\cite{yang2020varying}, Lemma 4.1 in Peng \cite{peng1993backward} and Lemma \ref{lemma 3.4}, we have
\begin{equation}\label{n1}
 \mathbb{E}\left[\frac{1}{\rho}\int_{\tau^{\bar{u}}}^{\tau^{u_\rho}}\tilde{\Psi}^{u_\rho}(t)dt
 -\Big(
 \frac{\tilde{\Psi}^{\bar{u}}\left(\tau^{\bar{u}}\wedge\tau^{u_\rho}\right)(\tau^{u_\rho}-\tau^{\bar{u}})}{\rho}\Big)
\right]^2 \to 0  \ (\rho \to 0).
\end{equation}
Using Lemma 5 in Yang\cite{yang2020varying} and Lemma 4.1 in Peng \cite{peng1993backward}, we have
\begin{equation}\label{n2}
\mathbb{E}\left[\left(\frac{\Psi\left(X^{u_\rho}\left(\tau^{\bar{u}}\right)\right)-\Psi\left(X^{\bar{u}}\left(\tau^{\bar{u}}\right)\right)}
{\rho}\right)-\Psi_x\left(X^{\bar{u}}\left(\tau^{\bar{u}}\wedge\tau^{u_\rho}\right)\right) \xi\left(\tau^{\bar{u}}\wedge\tau^{u_\rho}\right)
\right]^2  \to 0  \ (\rho \to 0).
\end{equation}
Similarly, using Lemma \ref{3.4} and the continuous dependence of BSDEs on terminal value, we can obtain that
 \begin{align}
&\frac{1}{\rho}\int_{\tau^{\bar{u}}}^{\tau^{u_\rho}}
g\left (X^{u_\rho}\left (s \right ),Y^{u_\rho}\left (s \right ),Z^{u_\rho}\left (s \right ),u_\rho\left (s \right )\right ) ds \nonumber\\
 &\overset{L^2}{\longrightarrow} g\left(X^{\bar{u}}\left(\tau^{\bar{u}}\wedge\tau^{u_\rho}\right), Y^{\bar{u}}\left(\tau^{\bar{u}}\wedge\tau^{u_\rho}\right), Z^{\bar{u}}\left(\tau^{\bar{u}}\wedge\tau^{u_\rho}\right), \bar{u}\left(\tau^{\bar{u}}\wedge\tau^{u_\rho}\right)\right)\frac{\tau^{u_\rho}-\tau^{\bar{u}}}{\rho} \ \ (\rho \to 0). \label{n4}
\end{align}
Note that $\kappa^{\rho}(\tau^{\bar{u}}\wedge\tau^{u_\rho})$ satisfies (\ref{condition1}), we have
$$
\mathbb{E}\left|\tilde{Y}_\rho(\tau^{\bar{u}} \wedge \tau^{u_\rho} )\right|^2 \to 0 \ \ (\rho \to 0).
$$
For the term $J_\rho$, it is easy to show that $\lim_{\rho \to 0}J_\rho = 0$. Applying Gronwall's inequality to equation (\ref{grownwall1}), it follows that
\begin{equation}
\label{con1}
 \begin{aligned}
&\lim_{\rho \to 0} \mathbb{E}\left | \tilde{Y}_\rho\left ( t \right )   \right |^2
=0\\
&\lim_{\rho \to 0} \mathbb{E}\int_{t}^{\tau^{\bar{u}} \wedge \tau^{u_\rho}} \left | \tilde{Z}_\rho\left ( s \right )   \right |^2 ds
=0
\end{aligned}
\end{equation}
for $t \in [0, \tau^{\bar{u}} \wedge \tau^{u_\rho}]$ where $\tau^{\bar{u}}<\tau^{u_\rho}$.

Suppose $\tau^{u_\rho} < \tau^{\bar{u}}$. Using the similar manner in the proof of case $\tau^{\bar{u}}<\tau^{u_\rho}$, we can obtain equation (\ref{con1}).

\textbf{Secondly, we consider the case $(ii)$}. Notice that $\inf \left\{t: \mathbb{E}[\Phi(X^{\bar{u}}(t))] \geq \alpha, t \in[0, T]\right\}=T$. If there exists sequence $\rho_n \to 0$ as $n \to +\infty$ such that
$$
\tau^{u_{\rho_n}}=\inf \left\{t: \mathbb{E}\left[\Phi\left(X^{u_{\rho_n}}(t)\right)\right] \geq \alpha, t \in[0, T]\right\}<T,
$$
we can obtain the convergence results similar to the case $(i)$ and $\kappa^{\rho}(\tau^{\bar{u}} \wedge \tau^{u_\rho})$ satisfies equation (\ref{condition1}).

If there exists sequence $\rho_n \rightarrow 0$ as $n \rightarrow+\infty$ such that
$$
\inf \left\{t: \mathbb{E}\left[\Phi\left(X^{u_{\rho_n}}(t)\right)\right] \geq \alpha, t \in[0, T]\right\}=+\infty,
$$
we can obtain $\tau^{u_{\rho_n}}=\tau^{\bar{u}}=T$. There exists
\begin{align}\label{3.26}
\begin{split}
&\mathbb{E} \left|\tilde{Y}_\rho(t)\right|^2
+\mathbb{E} \int_t^T\left|\tilde{Z}_\rho(s)\right|^2 ds\\
=&\mathbb{E} \left|\tilde{Y}_\rho(T)\right|^2
-2 E\int_t^T\left(\tilde{Y}_\rho(s), A_\rho(s) \tilde{X}_\rho(s)+B_\rho(s) \tilde{Y}_\rho(s)+C_\rho(s) \tilde{Z}_\rho(s)+D_\rho(s)\right) ds\\
\le& \mathbb{E} \left|\tilde{Y}_\rho(T)\right|^2
+K E \int_t^{T}\left|\tilde{Y}_\rho(s)\right|^2 d s+2^{-1} E \int_t^T\left|\tilde{Z}_\rho(s)\right|^2 d s+J_\rho
\end{split}
\end{align}
where
$$
J_\rho=\mathbb{E} \int_t^T\left(\left|A_\rho(s) \tilde{x}_\rho(s)\right|^2+\left|D_\rho(s)\right|^2\right) ds.
$$
For the item $\mathbb{E}\left|\tilde{Y}_\rho(T)\right|^2$, combing equation (\ref{2.4}), one can obtain
\begin{align}
\mathbb{E}\left|\tilde{Y}_\rho(T)\right|^2=&
\mathbb{E}\left[\Big(\frac{1}{\rho}\left ( Y^{u_\rho}\left ( \tau^{\bar{u}} \right ) -Y^{\bar{u}}\left ( \tau^{\bar{u}} \right )  \right )
-\kappa^{\rho}(\tau^{\bar{u}} \wedge \tau^{u_\rho})
\Big)^2\right] \nonumber\\
=&\mathbb{E}\left[\Big(
\frac{1}{\rho}\left(\Psi(X^{u_\rho}\left ( T \right ))
-\Psi(X^{\bar{u}}\left ( T\right ))\right )-\kappa^{\rho}(\tau^{\bar{u}} \wedge \tau^{u_\rho})\Big)^2\right] \label{nn1}.
\end{align}
Applying Lemma 5 in Yang\cite{yang2020varying} to equation (\ref{nn1}), and note that $\kappa^{\rho}(\tau^{\bar{u}} \wedge \tau^{u_\rho})=\Psi_x\left(X^{\bar{u}}\left(\tau^{\bar{u}}\wedge\tau^{u_{\rho}}\right)\right) \notag \\ \xi\left(\tau^{\bar{u}}\wedge\tau^{u_{\rho}}\right)$. Thus, we can obtain that $\mathbb{E}\left|\tilde{Y}_\rho(T)\right|^2 \to 0$ when $\rho \to 0$.
By Gronwall's inequality, we can prove the convergence of the last two term in equation (\ref{3.26}).

\textbf{Finally, we consider the case $(iii)$}, where $\{t:\mathbb{E}[\Phi(X^{\bar{u}}(t))] \geq \alpha, t \in[0, T]\}=\varnothing$. For sufficiently small $\rho$,
$$
\inf \left\{t: \mathbb{E}\left[\Phi\left(X^{u_\rho}(t)\right)\right] \geq \alpha, t \in[0, T]\right\}=+\infty.
$$
We have $\tau^{u_\rho}=\tau^{\bar{u}}=T$, which is similar to the second condition in case $(ii)$ and we omit the proof.

This completes the proof.
\end{proof}
We now derive the variational equation for cost functional (\ref{cost_functional}). Note that $\bar{u}(\cdot)$ is the optimal control defined in equation (\ref{min_cost_functional}), we have
\begin{equation}\label{variational_equ}
\rho^{-1}\left [ J\left ( \bar{u}\left ( \cdot  \right ) +
\rho v\left ( \cdot  \right )  \right) -J\left ( \bar{u}\left ( \cdot  \right )  \right ) \right ]
\ge 0.
\end{equation}

\begin{lemma}\label{lemma 3.7}
Let Assumptions \ref{assumption_1}, \ref{assumption_2} and \ref{assumption_3} hold. $\tau^{\bar{u}}$ is defined in equation (\ref{varying_time}), and let $h^{\bar{u} }\left ( \tau^{\bar{u} } \right ) \ne 0$ and $h^{\bar{u} }\left ( \cdot  \right )$ be continuous at point $\tau^{\bar{u}}$. We have the following results.

(i). If $\tau^{\bar{u}} < T$, when $\rho \to 0$, one can obtain
\begin{align}\label{ineq-1}
\begin{split}
\mathbb{E}&\Big[\int_0^{\tau^{\bar{u}}}\Big(l_x\left(X^{\bar{u}}(t), Y^{\bar{u}}(t), Z^{\bar{u}}(t), \bar{u} (t), t\right)\xi(t)+l_y\left(X^{\bar{u}}(t), Y^{\bar{u}}(t), Z^{\bar{u}}(t), \bar{u}(t), t\right)\eta(t)\Big.\Big.\\
&\Big.\Big.+l_z\left(X^{\bar{u}}(t), Y^{\bar{u}}(t), Z^{\bar{u}}(t), \bar{u}(t), t\right)\zeta(t)+l_v\left(X^{\bar{u}}(t), Y^{\bar{u}}(t), Z^{\bar{u}}(t), \bar{u}(t), t\right)v(t)\Big.\Big.\\
&\Big.\Big.-\frac{\bar{h}(v(t), t) \tilde{\beta}^{\bar{u}}\left(\tau^{\bar{u}}\right)}{h^{\bar{u}}\left(\tau^{\bar{u}}\right)}
-\mathbb{E}[l\left(X^{\bar{u}}(\tau^{\bar{u}}), Y^{\bar{u}}(\tau^{\bar{u}}), Z^{\bar{u}}(\tau^{\bar{u}}), \bar{u}(\tau^{\bar{u}}), \tau^{\bar{u}}\right)] \frac{\bar{h}(v(t), t)}{h^{\bar{u}}\left(\tau^{\bar{u}}\right)}\Big) dt\Big]\\
&+\mathbb{E}\left[\beta_x\left(X^{\bar{u}}\left(\tau^{\bar{u}}\right)\right)^{\top} \xi\left(\tau^{\bar{u}}\right)\right]+\mathbb{E}\left[\gamma_y(Y^{\bar{u}}\left(0\right))^{\top} \eta(0)\right]\ge 0.
\end{split}
\end{align}
where $\tilde{\beta}^{\bar{u}}$ satisfies
\begin{equation}\label{beta}
\tilde{\beta}^{\bar{u}}(t)=\mathbb{E}\Big[\beta_x\left(X^{\bar{u}}(t)\right)^{\top} f\left(X^{\bar{u}}(t), \bar{u}(t)\right)+\frac{1}{2} \sum_{j=1}^d \sigma^j\left(X^{\bar{u}}(t), \bar{u}(t)\right)^{\top} \beta_{xx}\left(X^{\bar{u}}(t)\right) \sigma^j\left(X^{\bar{u}}(t), \bar{u}(t)\right)\Big].
\end{equation}

(ii). If $inf\left\{t:\mathbb{E}[\Phi(X^{\bar{u}}(t))]\ge \alpha,t \in [0,T]\right\}=T$, when $\rho \to 0$, one can obtain inequality (\ref{ineq-1}) or (\ref{ineq-2})

(iii). If $\left\{t:\mathbb{E}[\Phi(X^{\bar{u}}(t))]\ge \alpha,t \in [0,T]\right\}=\emptyset$, when $\rho \to 0$, one can obtain
\begin{align}\label{ineq-2}
\begin{split}
&\mathbb{E}\Big[\int _ 0 ^ T \left(l_x\left(X^{\bar{u}}(t), Y^{\bar{u}}(t), Z^{\bar{u}}(t), \bar{u}(t), t\right) \xi(t)+l_y\left(X^{\bar{u}}(t), Y^{\bar{u}}(t), Z^{\bar{u}}(t), \bar{u}(t), t\right) \eta(t)\Big.\right.\\
&\left.\Big.+l_z\left(X^{\bar{u}}(t), Y^{\bar{u}}(t), Z^{\bar{u}}(t), \bar{u}(t), t\right) \zeta(t)+l_v\left(X^{\bar{u}}(t), Y^{\bar{u}}(t), Z^{\bar{u}}(t), \bar{u}(t), t\right) v(t)\right) d t \Big]\\
&+\mathbb{E}\left[\beta_x\left(X^{\bar{u}}(T)\right)\xi(T)\right]
+\mathbb{E}\left[\gamma_y\left(Y^{\bar{u}}(0)\right)\eta(0)\right]
\ge 0.
\end{split}
\end{align}
\end{lemma}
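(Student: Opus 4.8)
The plan is to pass to the limit $\rho\to0$ in the variational inequality (\ref{variational_equ}) after computing $\lim_{\rho\to0}\rho^{-1}[J(u_\rho)-J(\bar u)]$ term by term. Split $\rho^{-1}[J(u_\rho)-J(\bar u)]$ into the contributions of the running, terminal and initial cost. For the running cost, write $\int_0^{\tau^{u_\rho}}l(X^{u_\rho},\dots)\,dt-\int_0^{\tau^{\bar u}}l(X^{\bar u},\dots)\,dt$ as $\int_0^{\tau^{u_\rho}\wedge\tau^{\bar u}}[\,l(X^{u_\rho},\dots)-l(X^{\bar u},\dots)\,]\,dt$ plus a ``moving'' integral over the interval between $\tau^{u_\rho}\wedge\tau^{\bar u}$ and $\tau^{u_\rho}\vee\tau^{\bar u}$, taken along whichever trajectory is defined there and carrying the sign of $\tau^{u_\rho}-\tau^{\bar u}$; on $[0,\tau^{u_\rho}\wedge\tau^{\bar u}]$ both state triples are available. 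Expanding $l(X^{u_\rho},\dots)-l(X^{\bar u},\dots)$ by Taylor's formula with integral remainder, dividing by $\rho$, and using $\rho^{-1}(X^{u_\rho}-X^{\bar u})\to\xi$ (Peng's estimate restated in Lemma \ref{equation_lemma}), $\rho^{-1}(Y^{u_\rho}-Y^{\bar u})\to\eta$ and $\rho^{-1}(Z^{u_\rho}-Z^{\bar u})\to\zeta$ (Lemma \ref{equation_lemma} for $\tilde Y_\rho,\tilde Z_\rho\to0$ combined with Lemma \ref{vari-limit} for $\eta^\rho\to\eta,\ \zeta^\rho\to\zeta$), together with $\tau^{u_\rho}\wedge\tau^{\bar u}\to\tau^{\bar u}$ (Lemma \ref{lemma 3.2}) and the growth bounds of Assumption \ref{assumption_2} for uniform integrability, the fixed-interval part tends to $\mathbb{E}\int_0^{\tau^{\bar u}}(l_x\xi+l_y\eta+l_z\zeta+l_v v)\,dt$.

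For the moving part, by continuity of $l$, the uniform $L^2$-convergence of $X^{u_\rho}$ to $X^{\bar u}$, and the continuous dependence of the BSDE solution near the terminal time, the time-average of $\mathbb{E}[l]$ over the shrinking interval converges to $\mathbb{E}[l(X^{\bar u}(\tau^{\bar u}),Y^{\bar u}(\tau^{\bar u}),Z^{\bar u}(\tau^{\bar u}),\bar u(\tau^{\bar u}),\tau^{\bar u})]$, while $\rho^{-1}(\tau^{u_\rho}-\tau^{\bar u})\to-\int_0^{\tau^{\bar u}}\bar h(v(t),t)/h^{\bar u}(\tau^{\bar u})\,dt$ by Lemma \ref{lemma 3.4}(i); their product gives the term $-\int_0^{\tau^{\bar u}}\mathbb{E}[l(X^{\bar u}(\tau^{\bar u}),\dots)]\,\bar h(v(t),t)/h^{\bar u}(\tau^{\bar u})\,dt$ in (\ref{ineq-1}). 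For the terminal cost I would write $\beta(X^{u_\rho}(\tau^{u_\rho}))-\beta(X^{\bar u}(\tau^{\bar u}))=[\beta(X^{u_\rho}(\tau^{u_\rho}))-\beta(X^{u_\rho}(\tau^{\bar u}))]+[\beta(X^{u_\rho}(\tau^{\bar u}))-\beta(X^{\bar u}(\tau^{\bar u}))]$, noting that $X^{u_\rho}$ is defined on all of $[0,T]$. Apply It\^o's formula to $\beta(X^{u_\rho}(\cdot))$ between $\tau^{\bar u}$ and $\tau^{u_\rho}$, with signed limits so that one formula covers both $\tau^{u_\rho}>\tau^{\bar u}$ and $\tau^{u_\rho}<\tau^{\bar u}$: the stochastic integral has zero expectation since both endpoints are deterministic, and $\rho^{-1}$ times the expected drift converges, by continuity of $\beta_x,\beta_{xx},f,\sigma$ and Lemma \ref{lemma 3.4}(i), to $-\tilde\beta^{\bar u}(\tau^{\bar u})\int_0^{\tau^{\bar u}}\bar h(v(t),t)/h^{\bar u}(\tau^{\bar u})\,dt$, i.e.\ the $-\bar h\,\tilde\beta^{\bar u}/h^{\bar u}$ term in (\ref{ineq-1}); the second bracket, by Taylor expansion and $\rho^{-1}(X^{u_\rho}(\tau^{\bar u})-X^{\bar u}(\tau^{\bar u}))\to\xi(\tau^{\bar u})$, gives $\mathbb{E}[\beta_x(X^{\bar u}(\tau^{\bar u}))^\top\xi(\tau^{\bar u})]$. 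The initial cost gives $\mathbb{E}[\gamma_y(Y^{\bar u}(0))^\top\eta(0)]$ by Taylor expansion and $\rho^{-1}(Y^{u_\rho}(0)-Y^{\bar u}(0))\to\eta(0)$ (Lemmas \ref{vari-limit}, \ref{equation_lemma} at $t=0$). Summing these limits and letting $\rho\to0$ in (\ref{variational_equ}) yields (\ref{ineq-1}), proving (i).

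For (iii), one has $\tau^{u_\rho}=\tau^{\bar u}=T$ for $\rho$ small enough (Lemma \ref{lemma 3.4}(iii)), so every moving contribution vanishes identically and the same computation gives (\ref{ineq-2}). For (ii), since $\inf\{t:\mathbb{E}[\Phi(X^{\bar u}(t))]\ge\alpha,\ t\in[0,T]\}=T$, along one subsequence $\tau^{u_{\rho_n}}<T$ and the argument of (i) yields (\ref{ineq-1}), while along the other subsequence $\tau^{u_{\rho_n}}=\tau^{\bar u}=T$ and the argument of (iii) yields (\ref{ineq-2}); this matches the dichotomy of Lemma \ref{lemma 3.4}(ii). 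The main obstacle is the rigorous handling of the moving contributions: one must show that the time-averaged expectations over the vanishing interval between $\tau^{u_\rho}$ and $\tau^{\bar u}$ converge to the corresponding values at $\tau^{\bar u}$ — in particular controlling $Z^{u_\rho}$ there through BSDE a priori estimates and continuous dependence — and justify interchanging $\lim_{\rho\to0}$ with both $\mathbb{E}$ and the $dt$-integral, which relies on Assumptions \ref{assumption_2} and \ref{assumption_3} for domination and on the boundedness of $\rho^{-1}(\tau^{u_\rho}-\tau^{\bar u})$ from Lemma \ref{lemma 3.4}; a secondary nuisance is keeping track of the sign of $\tau^{u_\rho}-\tau^{\bar u}$, which is handled by writing every moving integral with signed limits.
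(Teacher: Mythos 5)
Your proposal is correct and follows essentially the same route as the paper: the same splitting of $\rho^{-1}[J(u_\rho)-J(\bar u)]$ into the running, terminal and initial cost contributions, the same decomposition of the terminal cost as $[\beta(X^{u_\rho}(\tau^{u_\rho}))-\beta(X^{u_\rho}(\tau^{\bar u}))]+[\beta(X^{u_\rho}(\tau^{\bar u}))-\beta(X^{\bar u}(\tau^{\bar u}))]$ with It\^o's formula on the moving interval, the same treatment of the running-cost integral over $[\tau^{u_\rho}\wedge\tau^{\bar u},\tau^{u_\rho}\vee\tau^{\bar u}]$ via Lemma \ref{lemma 3.4}, and the same case dichotomy for (ii) and (iii). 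The only cosmetic difference is that you fold the two sign cases $\tau^{u_\rho}\gtrless\tau^{\bar u}$ into one signed-limit formula, where the paper treats them separately.
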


\begin{proof}
\textbf{Firstly, we consider the case $(i)$}. By equation (\ref{variational_equ}), we can obtain
\begin{align}\label{casei}
\begin{split}
&\rho^{-1}\left [ J\left ( \bar{u}\left ( \cdot  \right ) +
\rho v\left ( \cdot  \right )  \right) -J\left ( \bar{u}\left ( \cdot  \right )  \right ) \right ] \\
=& \rho^{-1}\mathbb{E}\left[\int_0^{\tau^{u_\rho}}l(X^{u_\rho}(t), Y^{u_\rho}(t), Z^{u_\rho}(t), u_\rho(t), t)dt
-\int_0^{\tau^{\bar{u}}}l(X^{\bar{u}}(t), Y^{\bar{u}}(t), Z^{\bar{u}}(t), \bar{u}(t), t)dt\right]\\
&+\rho^{-1}\mathbb{E}\left[\beta(X^{u_\rho}(\tau^{u_\rho}))-\beta(X^{\bar{u}}(\tau^{\bar{u}}))\right]+\rho^{-1}\mathbb{E}\left[\gamma(Y^{u_\rho}(0))-\gamma(Y^{\bar{u} }(0))
\right] \ge 0.
\end{split}
\end{align}
For notation simplicity, we denote
\begin{align*}
I_1 = &\rho^{-1}\mathbb{E}\left[\beta(X^{u_\rho}(\tau^{u_\rho}))-\beta(X^{\bar{u}}(\tau^{\bar{u} }))\right],\\
I_2 = &\rho^{-1}\mathbb{E}\left[\gamma(Y^{u_\rho}(0))-\gamma(Y^{\bar{u} }(0))\right],\\
I_3 = &\rho^{-1}\mathbb{E}\left[\int_0^{\tau^{u_\rho}}l(X^{u_\rho}(t), Y^{u_\rho}(t), Z^{u_\rho}(t), u_\rho(t), t)dt
-\int_0^{\tau^{\bar{u}}}l(X^{\bar{u}}(t), Y^{\bar{u}}(t), Z^{\bar{u}}(t), \bar{u}(t), t)dt\right].
\end{align*}

Firstly, we consider the term $I_1$ and $I_2$. Using the similar manner in the proof of Lemma \ref{equation_lemma}, for term $I_1$, we have
\begin{align*}
I_1 =&\rho^{-1}\mathbb{E}\left[\beta(X^{u_\rho}(\tau^{u_\rho}))-\beta (X^{\bar{u}}(\tau^{\bar{u} }))\right]\\
=&\rho^{-1}\mathbb{E}\left[\beta (X^{u_\rho}(\tau^{u_\rho}))-\beta (X^{u_\rho}(\tau^{\bar{u} }))\right]+\rho^{-1}\mathbb{E}\left[\beta(X^{u_\rho}(\tau^{\bar{u}}))-\beta(X^{\bar{u}}(\tau^{\bar{u} }))\right].
\end{align*}
Applying It\^{o}'s formula to $\beta(\cdot)$, we have
\begin{equation*}
\begin{cases}
\begin{aligned}
&d\beta\left(X^{u_\rho}(t)\right)=\beta _x\left(X^{u_\rho}(t))\right)^{\top}d X^{u_\rho}+\frac{1}{2}dX^{u_\rho}(t)^{\top} \beta _{x x}\left(X^{u_\rho}(t)\right) dX^{u_\rho}(t),\\
&dX^{u_\rho}(t)=b\left(X^{u_\rho}(t),u_\rho(t)\right)dt + \sigma\left(X^{u_\rho}(t), u_\rho(t)\right) d W_t.
\end{aligned}
\end{cases}
\end{equation*}
We denote
$$
\tilde{\beta}^{u_\rho}(t)=\mathbb{E}\left[\beta_x\left(X^{u_\rho}(t)\right)^{\top} f\left(X^{u_\rho}(t), u_\rho(t)\right)+\frac{1}{2} \sum_{j=1}^d \sigma^j\left(X^{u_\rho}(t), u_\rho(t)\right)^{\top} \beta_{xx}\left(X^{u_\rho}(t)\right) \sigma^j\left(X^{u_\rho}(t), u_\rho(t)\right)\right].
$$
Thus, one can obtain
\begin{equation*}
\rho^{-1}\mathbb{E}\left[\beta\left(X^{u_\rho}\left(\tau^{u_\rho}\right)\right)-\beta\left(X^{u_\rho}\left(\tau^{\bar{u}}\right)\right)\right]
= \rho^{-1}\int_{\tau^{\bar{u}}}^{\tau^{u_\rho}}
\tilde{\beta}^{u_\rho}(t)dt=\frac{\tau^{u_\rho}-\tau^{\bar{u}}}{\rho}(\tilde{\beta}^{u_\rho}(\tau^{\bar{u}})+o(1))
\end{equation*}
By Lemma \ref{lemma 3.4}, when $\rho \to 0$, we have
\begin{equation}
\label{n21}
\rho^{-1}\mathbb{E}\left[\beta\left(X^{u_\rho}\left(\tau^{u_\rho}\right)\right)-\beta\left(X^{u_\rho}\left(\tau^{\bar{u}}\right)\right)\right]
\to -\int_0^{\tau^{\bar{u}}} \tilde{\beta}^{\bar{u}}(\tau^{\bar{u}})\frac{\bar{h}(v(t), t)}{h^{\bar{u}}\left(\tau^{\bar{u}}\right)}dt.
\end{equation}
Using Lemma \ref{equation_lemma}, when $\rho \to 0$, it follows that
\begin{equation}
\label{n22}
\begin{aligned}
&\rho^{-1}\mathbb{E}\left[\beta (X^{u_\rho}(\tau^{\bar{u}}))-\beta (X^{\bar{u}}(\tau^{\bar{u} }))\right]\\
=&\rho^{-1}\mathbb{E}\left[(X^{u_\rho}(\tau^{\bar{u}})-X^{\bar{u}}(\tau^{\bar{u} }))(\beta_x(X^{\bar{u}}(\tau^{\bar{u} })) + o(1))\right] \to \mathbb{E}\left[\beta_x(X^{\bar{u}}(\tau^{\bar{u} }))^{\top}
\xi(\tau^{\bar{u} })\right].
\end{aligned}
\end{equation}
Combing equations (\ref{n21}) and (\ref{n22}), when $\rho \to 0$, it follows that
\begin{equation}
\label{re11}
\begin{aligned}
I_1 =& \rho^{-1}\mathbb{E}\left[\beta (X^{u_\rho}(\tau^{u_\rho}))-\beta (X^{\bar{u}}(\tau^{\bar{u}}))\right] \\
\to &-\int_0^{\tau^{\bar{u}}} \frac{\bar{h}(v(t), t)\tilde{\beta}(\tau^{\bar{u}})}{h^{\bar{u}}\left(\tau^{\bar{u}}\right)}dt + \mathbb{E}\left[\beta_x(X^{\bar{u}}(\tau^{\bar{u} }))^{\top}
\xi(\tau^{\bar{u} })\right].
\end{aligned}
\end{equation}

For the term $I_2$, using Lemma \ref{vari-limit} and \ref{equation_lemma}, when $\rho \to 0$, it follows that
\begin{equation}
\label{re21}
\begin{aligned}
I_2 = &\rho^{-1} \mathbb{E}\left[\gamma\left(Y^{u_\rho}(0)\right)-\gamma\left(Y^{\bar{u}}(0)\right)\right]\\
=&\rho^{-1}\mathbb{E}\left[\left(Y^{u_\rho}\left(0\right)-Y^{\bar{u}}\left(0\right)\right)\left(\gamma_y\left(Y^{\bar{u}}\left(0\right)\right)+o(1)\right)\right]\to \mathbb{E}\left[\gamma_y\left(Y^{\bar{u}}\left(0\right)\right)^{\top}\eta\left(0\right)\right].
\end{aligned}
\end{equation}
For the term $I_3$, we have
\begin{align*}
I_3=&\rho^{-1} \mathbb{E}[\int_0^{\tau^{u_\rho}} l\left(X^{u_\rho}(t), Y^{u_\rho}(t), Z^{u_\rho}(t), u_\rho(t), t\right) d t-\int_0^{\tau^{\bar{u}}} l\left(X^{\bar{u}}(t), Y^{\bar{u}}(t), z(t), \bar{u}(t), t\right) d t]\\
=&\rho^{-1} \mathbb{E}[\int_0^{\tau^{u_\rho}\wedge\tau^{\bar{u} }} (l\left(X^{u_\rho}(t), Y^{u_\rho}(t), Z^{u_\rho}(t), u_\rho(t), t\right)-l\left(X^{\bar{u}}(t), Y^{\bar{u}}(t), z(t), \bar{u}(t), t\right)) dt]\\
&+\rho^{-1} \mathbb{E}[\int_{\tau^{u_\rho}\wedge\tau^{\bar{u} }}^{\tau^{u_\rho}} l\left(X^{u_\rho}(t), Y^{u_\rho}(t), Z^{u_\rho}(t), u_\rho(t), t\right)dt]\\
&-\rho^{-1} \mathbb{E}[\int_{\tau^{u_\rho}\wedge\tau^{\bar{u} }}^{\tau^{\bar{u}}} l\left(X^{\bar{u}}(t), Y^{\bar{u}}(t), Z^{\bar{u}}(t), \bar{u}(t), t\right)dt].
\end{align*}

Consider the case that $\tau^{u_\rho}>\tau^{\bar{u}}$. Using Lemma \ref{lemma 3.4}, when $\rho \to 0$, it follows that
\begin{equation}\label{n31}
\begin{aligned}
&\rho^{-1} \mathbb{E}[\int_{\tau^{\bar{u}}}^{\tau^{u_\rho}} l\left(X^{u_\rho}(t), Y^{u_\rho}(t), Z^{u_\rho}(t), u_\rho(t), t\right)dt] \\
= &\rho^{-1} \mathbb{E}[(\tau^{u_\rho}-\tau^{\bar{u}}) (l\left(X^{u_\rho}(\tau^{\bar{u}}), Y^{u_\rho}(\tau^{\bar{u}}), Z^{u_\rho}(\tau^{\bar{u}}), u_\rho(\tau^{\bar{u}}), \tau^{\bar{u}}\right) + o(1))]\\
\to&-\int_0^{\tau^{\bar{u}}} \mathbb{E}[l\left(X^{\bar{u}}(\tau^{\bar{u}}), Y^{\bar{u}}(\tau^{\bar{u}}), Z^{\bar{u}}(\tau^{\bar{u}}), \bar{u}(\tau^{\bar{u}}), \tau^{\bar{u}}\right)] \frac{\bar{h}(v(t), t)}{h^{\bar{u}}\left(\tau^{\bar{u}}\right)}dt.
\end{aligned}
\end{equation}
Using Lemma \ref{vari-limit} and \ref{equation_lemma}, when $\rho \to 0$,
\begin{equation}\label{n32}
\begin{aligned}
&\rho^{-1} \mathbb{E}[\int_0^{\tau^{\bar{u}}\wedge\tau^{u_\rho}} (l\left(X^{u_\rho}(t), Y^{u_\rho}(t), Z^{u_\rho}(t), u_\rho(t), t\right)-l\left(X^{\bar{u}}(t), Y^{\bar{u}}(t), z(t), \bar{u}(t), t\right)) dt]\\
\to& \mathbb{E}[\int_0^{\tau^{\bar{u}}}\Big(l_x\left(X^{\bar{u}}(t), Y^{\bar{u}}(t), Z^{\bar{u}}(t), \bar{u}(t), t\right)\xi(t)+l_y\left(X^{\bar{u}}(t), Y^{\bar{u}}(t), Z^{\bar{u}}(t), \bar{u}(t), t\right)\eta(t)\Big.\\
&\Big.+l_z\left(X^{\bar{u}}(t), Y^{\bar{u}}(t), Z^{\bar{u}}(t), \bar{u}(t), t\right)\zeta(t)+l_v\left(X^{\bar{u}}(t), Y^{\bar{u}}(t), Z^{\bar{u}}(t), \bar{u}(t), t\right)v(t)\Big)dt].
\end{aligned}
\end{equation}
Combining equations (\ref{re11}), (\ref{re21}), (\ref{n31}) and (\ref{n32}), letting $\rho \to 0$, we can obtain
\begin{align}\label{conditionI}
\begin{split}
\mathbb{E}&\Big[\int_0^{\tau^{\bar{u}}}\Big(l_x\left(X^{\bar{u}}(t), Y^{\bar{u}}(t), Z^{\bar{u}}(t), \bar{u} (t), t\right)\xi(t)+l_y\left(X^{\bar{u}}(t), Y^{\bar{u}}(t), Z^{\bar{u}}(t), \bar{u}(t), t\right)\eta(t)\Big.\Big.\\
&\Big.\Big.+l_z\left(X^{\bar{u}}(t), Y^{\bar{u}}(t), Z^{\bar{u}}(t), \bar{u}(t), t\right)\zeta(t)+l_v\left(X^{\bar{u}}(t), Y^{\bar{u}}(t), Z^{\bar{u}}(t), \bar{u}(t), t\right)v(t)\Big.\Big.\\
&\Big.\Big.-\frac{\bar{h}(v(t), t) \tilde{\beta}\left(\tau^{\bar{u}}\right)}{h^{\bar{u}}\left(\tau^{\bar{u}}\right)}
-\mathbb{E}[l\left(X^{\bar{u}}(\tau^{\bar{u}}), Y^{\bar{u}}(\tau^{\bar{u}}), Z^{\bar{u}}(\tau^{\bar{u}}), \bar{u}(\tau^{\bar{u}}), \tau^{\bar{u}}\right)] \frac{\bar{h}(v(t), t)}{h^{\bar{u}}\left(\tau^{\bar{u}}\right)}\Big) dt\Big]\\
&+\mathbb{E}\left[\beta_x\left(X^{\bar{u}}\left(\tau^{\bar{u}}\right)\right)^{\top} \xi\left(\tau^{\bar{u}}\right)\right]+\mathbb{E}\left[\gamma_y(Y^{\bar{u}}\left(0\right))^{\top} \eta(0)\right]\ge 0.
\end{split}
\end{align}

Consider the case $\tau^{u_\rho}<\tau^{\bar{u}}$. Similarly, by Lemma \ref{lemma 3.4}, \ref{vari-limit}, \ref{equation_lemma} and Assumptions \ref{assumption_1}, \ref{assumption_2}, we can also obtain inequality (\ref{conditionI}).

\textbf{Secondly, we consider the case (ii)} where $\inf\  \{t: \mathbb{E}[\Phi(X^{\bar{u}}(t))] \geq \alpha, t \in[0, T]\}=T.$ If there exists sequence $\rho_n \to 0$ as $n \to +\infty$ such that
$$
\tau^{u_{\rho_n}}=\inf \left\{t: \mathbb{E}\left[\Phi\left(X^{u_{\rho_n}}(t)\right)\right] \geq \alpha, t \in[0, T]\right\}<T,
$$
we can obtain the inequality (\ref{conditionI}).

If there exists sequence $\rho_n \rightarrow 0$ as $n \rightarrow+\infty$ such that
$$
\inf \left\{t: \mathbb{E}\left[\Phi\left(X^{u_{\rho_n}}(t)\right)\right] \geq \alpha, t \in[0, T]\right\}=+\infty,
$$
similar to equation (\ref{casei}), we can obtain
\begin{align}
& \rho^{-1}[J(u(\cdot)+\rho v(\cdot))-J(u(\cdot))] \nonumber\\
= & \rho^{-1} \mathbb{E}\left[\int_0^T l\left(X^{u_\rho}(t), Y^{u_\rho}(t), Z^{u_\rho}(t), u_\rho(t), t\right) d t-\int_0^T l\left(X^{\bar{u}}(t), Y^{\bar{u}}(t), Z^{u_\rho}(t), \bar{u}(t), t\right) d t\right] \nonumber\\
& +\rho^{-1} \mathbb{E}\left[\beta\left(X^{u_\rho}\left(T\right)\right)-\beta\left(X^{\bar{u}}\left(T\right)\right)\right]+\rho^{-1} \mathbb{E}\left[\gamma\left(Y^{u_\rho}(0)\right)-\gamma\left(Y^{\bar{u}}(0)\right)\right] \label{ba1}.
\end{align}
Using Lemma \ref{equation_lemma}, when $\rho \to 0$, we have
\begin{equation}\label{ca1}
\begin{aligned}
 &\rho^{-1} \mathbb{E}\left[\int_0^T l\left(X^{u_\rho}(t), Y^{u_\rho}(t), Z^{u_\rho}(t), u_\rho(t), t\right) d t-\int_0^T l\left(X^{\bar{u}}(t), Y^{\bar{u}}(t), Z^{u_\rho}(t), \bar{u}(t), t\right) d t\right]\\
\rightarrow &\mathbb{E}\Big[\int _ 0 ^ T \big(l_x\left(X^{\bar{u}}(t), Y^{\bar{u}}(t), Z^{u_\rho}(t), \bar{u}(t), t\right) \xi(t)+l_y\left(X^{\bar{u}}(t), Y^{\bar{u}}(t), Z^{u_\rho}(t), \bar{u}(t), t\right) \eta(t)\\
&\Big.+l_z\left(X^{\bar{u}}(t), Y^{\bar{u}}(t), Z^{u_\rho}(t), \bar{u}(t), t\right) \zeta(t)+l_v\left(X^{\bar{u}}(t), Y^{\bar{u}}(t), Z^{u_\rho}(t), \bar{u}(t), t\right) v(t)\big) d t \Big].
\end{aligned}
\end{equation}
Applying Lemma 3.5 of Yang\cite{yang2020varying} and Lemma \ref{equation_lemma} to equation (\ref{ba1}), we have
\begin{align}\label{ca2}
\rho^{-1} \mathbb{E}\left[\beta\left(X^{u_\rho}(T)\right)-\beta\left(X^{\bar{u}}(T)\right)\right]
\to \mathbb{E}\left[\beta_x\left(X^{\bar{u}}(T)\right)\xi(T)\right]\ \ (\rho \to 0),
\end{align}
\begin{align}\label{ca3}
\rho^{-1} \mathbb{E}\left[\gamma\left(Y^{u_\rho}(0)\right)-\gamma\left(Y^{\bar{u}}(0)\right)\right]
\to \mathbb{E}\left[\gamma_y\left(Y^{\bar{u}}(0)\right)\eta(0)\right] \ \ (\rho \to 0).
\end{align}
Combing equations (\ref{ca1}), (\ref{ca2}) and (\ref{ca3}), we have
\begin{align*}
&\mathbb{E}\Big[\int _ 0 ^ T \left(l_x\left(X^{\bar{u}}(t), Y^{\bar{u}}(t), Z^{\bar{u}}(t), \bar{u}(t), t\right) \xi(t)+l_y\left(X^{\bar{u}}(t), Y^{\bar{u}}(t), Z^{\bar{u}}(t), \bar{u}(t), t\right) \eta(t)\Big.\right.\\
&\left.\Big.+l_z\left(X^{\bar{u}}(t), Y^{\bar{u}}(t), Z^{\bar{u}}(t), \bar{u}(t), t\right) \zeta(t)+l_v\left(X^{\bar{u}}(t), Y^{\bar{u}}(t), Z^{\bar{u}}(t), \bar{u}(t), t\right) v(t)\right) d t \Big]\\
&+\mathbb{E}\left[\beta_x\left(X^{\bar{u}}(T)\right)\xi(T)\right]
+\mathbb{E}\left[\gamma_y\left(Y^{\bar{u}}(0)\right)\eta(0)\right]
\ge 0.
\end{align*}

\textbf{Finally, we consider the case $(iii)$} where $inf\ \{t: \mathbb{E}[\Phi(X^{\bar{u}}(t))] \geq \alpha, t \in[0, T]\}=\emptyset$. Thus, for sufficiently small $\rho$, there exists $\tau^{u_\rho}=\tau^{\bar{u}}=T$. Similar to the second scenario in case $(ii)$, we have
\begin{align*}
&\mathbb{E}\Big[\int _ 0 ^ T \left(l_x\left(X^{\bar{u}}(t), Y^{\bar{u}}(t), Z^{\bar{u}}(t), \bar{u}(t), t\right) \xi(t)+l_y\left(X^{\bar{u}}(t), Y^{\bar{u}}(t), Z^{\bar{u}}(t), \bar{u}(t), t\right) \eta(t)\Big.\right.\\
&\left.\Big.+l_z\left(X^{\bar{u}}(t), Y^{\bar{u}}(t), Z^{\bar{u}}(t), \bar{u}(t), t\right) \zeta(t)+l_v\left(X^{\bar{u}}(t), Y^{\bar{u}}(t), Z^{\bar{u}}(t), \bar{u}(t), t\right) v(t)\right) d t \Big]\\
&+\mathbb{E}\left[\beta_x\left(X^{\bar{u}}(T)\right)\xi(T)\right]
+\mathbb{E}\left[\gamma_y\left(Y^{\bar{u}}(0)\right)\eta(0)\right]
\ge 0.
\end{align*}

This completes the proof.
\end{proof}

\section{Stochastic maximum principle}\label{sectionIV}

We introduce the following $\rho$-moving adjoint equations based on $\tau^{\bar{u}}$ and $\tau^{u_\rho}$:
\begin{align}
&\begin{cases}\label{max-1}
\begin{split}
\begin{aligned}
\noindent&\begin{aligned}
-d p^\rho(t)= &\left[f_x(x(t), u(t)) p^\rho(t)+g_x(x(t), y(t), z(t), u(t), t)q(t)\right.\\
&\left.+\sigma_x(x(t), u(t))k^\rho(t)+l_x(x(t), y(t), z(t), u(t), t)\right] dt-k^\rho(t) dW(t),
\end{aligned}\\
\noindent&\begin{aligned}
p^\rho(\tau^{u_\rho} \wedge {\tau^{\bar{u}}})= & \beta_x(x(\tau^{\bar{u}}\wedge \tau^{u_\rho}))-
q\left(\tau^{\bar{u}}\wedge\tau^{u_\rho}\right) \Psi_x\left(x\left(\tau^{\bar{u}}\wedge\tau^{u_\rho}\right)\right)
\end{aligned}
\end{aligned}
\end{split}
\end{cases}\\
&\begin{cases}\label{max-2}
\begin{split}
\begin{aligned}
\noindent&\begin{aligned}
-d q(t)= &{\left[g_y(x(t), y(t), z(t), u(t), t) q(t)+l_y(x(t), y(t), z(t), u(t), t)\right] d t } \\
&+\left[g_z(x(t), y(t), z(t), u(t), t) q(t)+l_z(x(t), y(t), z(t), u(t), t)\right] d W(t)
\end{aligned}\\
\noindent&\begin{aligned}
& q(0)=-\gamma_y(y(0))
\end{aligned}
\end{aligned}
\end{split}
\end{cases}\\
&\begin{cases}\label{max-3}
\begin{aligned}
-d p(t)= &\left[f_x(x(t), u(t)) p(t)+g_x(x(t), y(t), z(t), u(t), t)q(t)\right.\\
&\left.+\sigma_x(x(t), u(t))k(t)+l_x(x(t), y(t), z(t), u(t), t)\right] dt-k(t) dW(t),\\
p(\tau^{\bar{u}})= & \beta_x(x(\tau^{\bar{u}}))-
q\left(\tau^{\bar{u}}\right) \Psi_x\left(x\left(\tau^{\bar{u}}\right)\right)
\end{aligned}
\end{cases}
\end{align}
The above equations are classical FBSDEs. Based on Assumptions \ref{assumption_1} and \ref{assumption_2}, there exist unique solution
$$
(p^\rho(\cdot), k^\rho(\cdot), q(\cdot)) \in \mathcal{M} ^2\left(\mathbb{R}^n\right) \times \mathcal{M} ^2\left(\mathbb{R}^{n\times d}\right) \times \mathcal{M}^2\left(\mathbb{R}^m\right)
$$
which satisfy equations (\ref{max-1}) and (\ref{max-2}). Similarly, there exist unique solution
$$
(p(\cdot), k(\cdot), q(\cdot)) \in \mathcal{M} ^2\left(\mathbb{R}^n\right) \times \mathcal{M} ^2\left(\mathbb{R}^{n\times d}\right) \times \mathcal{M}^2\left(\mathbb{R}^m\right)
$$
which satisfy equations (\ref{max-2}) and (\ref{max-3}).

Then we denote the Hamiltonian and $\rho$-moving Hamiltonian as follows:
\begin{equation}
\begin{aligned}
&H(x, y, z, u, p, k, q, t)= (p, f(x, u))+(k, \sigma(x, u))+(q, g(x, y, z, u, t))+l(x, y, z, u, t)\\
&H^\rho(x, y, z, u, p^\rho, k^\rho, q, t)= (p^\rho, f(x, u))+(k^\rho, \sigma(x, u))+(q, g(x, y, z, u, t))+l(x, y, z, u, t)
\end{aligned}
\end{equation}
where $H^\rho,\ H:(x, y, z, u, p, k, q, t) \in \mathbb{R}^n \times \mathbb{R}^m \times \mathbb{R}^{m\times d} \times U \times \mathbb{R}^n \times \mathbb{R}^{n\times d} \times \mathbb{R}^m  \times[0, T] \rightarrow \mathbb{R}.$

We rewrite the equations (\ref{max-1}), (\ref{max-2}) and (\ref{max-3}) as follows:
\begin{align}
&\begin{cases}\label{hami-1}
\begin{split}
\begin{aligned}
\noindent&\begin{aligned}
-d p^\rho(t)=H^\rho_x(x(t), y(t), z(t), u(t), p^\rho(t), k^\rho(t), q(t), t)dt-k^\rho(t) dW(t),
\end{aligned}\\
\noindent&\begin{aligned}
p^\rho(\tau^{u_\rho} \wedge {\tau^{\bar{u}}})= &  \beta_x(x(\tau^{\bar{u}}\wedge \tau^{u_\rho}))-
q\left(\tau^{\bar{u}}\wedge\tau^{u_\rho}\right) \Psi_x\left(x\left(\tau^{\bar{u}}\wedge\tau^{u_\rho}\right)\right)
\end{aligned}
\end{aligned}
\end{split}
\end{cases}\\
&\begin{cases}\label{hami-2}
\begin{split}
\begin{aligned}
\noindent&\begin{aligned}
-d q(t)= &H_y(x(t), y(t), z(t), u(t), p(t), k(t), q(t), t) dt \\
&+H_z(x(t), y(t), z(t), u(t), p(t), k(t), q(t), t) d W(t)
\end{aligned}\\
\noindent&\begin{aligned}
& q(0)=-\gamma_y(y(0))
\end{aligned}
\end{aligned}
\end{split}
\end{cases}\\
&\begin{cases}\label{hami-3}
\begin{split}
\begin{aligned}
\noindent&\begin{aligned}
-d p(t)=H_x(x(t), y(t), z(t), u(t), p(t), k(t), q(t), t)dt-k(t) dW(t),
\end{aligned}\\
\noindent&\begin{aligned}
p(\tau^{\bar{u}})= & \beta_x(x(\tau^{\bar{u}}))-
q\left(\tau^{\bar{u}}\right) \Psi_x\left(x\left(\tau^{\bar{u}}\right)\right)
\end{aligned}
\end{aligned}
\end{split}
\end{cases}
\end{align}

The following Lemma is necessary in the proof of the stochastic maximum principle.
\begin{lemma}\label{hami-limit}
Let Assumptions \ref{assumption_1}, \ref{assumption_2} and \ref{assumption_3} hold. Suppose that $\tau^{\bar{u}}$ and $\tau^{u_{\rho}}$ is defined in equation (\ref{varying_time}). Then we have
\begin{align*}
&\lim_{\rho \to 0} \mathbb{E}\left [ \left |p^{\rho}\left(t\right)-p\left(t\right) \right |^2  \right ] =0,\\
&\lim_{\rho \to 0} \mathbb{E}\left [\int_{0}^{\tau^{\bar{u}}\wedge\tau^{u_\rho}}
 \left |k^{\rho}\left(t\right)-k\left(t\right) \right |^2 dt \right ] =0
\end{align*}
for $t \in [0, \tau^{u_\rho} \wedge \tau^{\bar{u}}]$.
\end{lemma}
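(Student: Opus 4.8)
The plan is to follow the pattern of Lemma \ref{vari-limit}, taking advantage of the fact that the equation (\ref{hami-2}) for $q(\cdot)$ does not involve $\rho$, so the \emph{same} process $q(\cdot)$ enters both (\ref{hami-1}) and (\ref{hami-3}). Put $\tilde{p}(t)=p^\rho(t)-p(t)$ and $\tilde{k}(t)=k^\rho(t)-k(t)$ for $t\in[0,\tau^{u_\rho}\wedge\tau^{\bar{u}}]$. Subtracting (\ref{hami-3}) from (\ref{hami-1}), the common terms $g_x(X^{\bar{u}}(t),Y^{\bar{u}}(t),Z^{\bar{u}}(t),\bar{u}(t),t)q(t)+l_x(X^{\bar{u}}(t),Y^{\bar{u}}(t),Z^{\bar{u}}(t),\bar{u}(t),t)$ cancel, and $(\tilde{p},\tilde{k})$ solves the homogeneous linear backward equation
\begin{equation*}
\begin{cases}
\begin{aligned}
-d\tilde{p}(t)=&\left(f_x(X^{\bar{u}}(t),\bar{u}(t))\tilde{p}(t)+\sigma_x(X^{\bar{u}}(t),\bar{u}(t))\tilde{k}(t)\right)dt-\tilde{k}(t)dW(t),\\
\tilde{p}(\tau^{u_\rho}\wedge\tau^{\bar{u}})=&p^\rho(\tau^{u_\rho}\wedge\tau^{\bar{u}})-p(\tau^{u_\rho}\wedge\tau^{\bar{u}}),
\end{aligned}
\end{cases}
\end{equation*}
whose coefficients $f_x,\sigma_x$ are bounded by Assumption \ref{assumption_2}. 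Applying It\^{o}'s formula to $|\tilde{p}(\cdot)|^2$ on $[t,\tau^{u_\rho}\wedge\tau^{\bar{u}}]$, absorbing the $\tilde{k}$-cross term and invoking Gronwall's inequality exactly as in the proof of Lemma \ref{vari-limit}, one obtains
\begin{equation*}
\sup_{0\le t\le\tau^{u_\rho}\wedge\tau^{\bar{u}}}\mathbb{E}|\tilde{p}(t)|^2+\mathbb{E}\int_0^{\tau^{u_\rho}\wedge\tau^{\bar{u}}}|\tilde{k}(t)|^2dt\le C\,\mathbb{E}|\tilde{p}(\tau^{u_\rho}\wedge\tau^{\bar{u}})|^2,
\end{equation*}
with $C$ depending only on $\tau^{\bar{u}}$ and the bounds on $f_x,\sigma_x$. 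Hence it suffices to prove $\mathbb{E}|\tilde{p}(\tau^{u_\rho}\wedge\tau^{\bar{u}})|^2\to 0$ as $\rho\to 0$.

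For the terminal value I would exploit the $\rho$-moving structure. Because $p^\rho$ is prescribed at the time $\tau^{u_\rho}\wedge\tau^{\bar{u}}$ in (\ref{hami-1}), its value there is simply $\beta_x(X^{\bar{u}}(\tau^{u_\rho}\wedge\tau^{\bar{u}}))-q(\tau^{u_\rho}\wedge\tau^{\bar{u}})\Psi_x(X^{\bar{u}}(\tau^{u_\rho}\wedge\tau^{\bar{u}}))$. In contrast, $p$ solves (\ref{hami-3}) on the larger interval $[0,\tau^{\bar{u}}]$; rolling it back over $[\tau^{u_\rho}\wedge\tau^{\bar{u}},\tau^{\bar{u}}]$ and using $p(\tau^{\bar{u}})=\beta_x(X^{\bar{u}}(\tau^{\bar{u}}))-q(\tau^{\bar{u}})\Psi_x(X^{\bar{u}}(\tau^{\bar{u}}))$ gives
\begin{equation*}
\begin{split}
p(\tau^{u_\rho}\wedge\tau^{\bar{u}})=&\,\beta_x(X^{\bar{u}}(\tau^{\bar{u}}))-q(\tau^{\bar{u}})\Psi_x(X^{\bar{u}}(\tau^{\bar{u}}))\\
&+\int_{\tau^{u_\rho}\wedge\tau^{\bar{u}}}^{\tau^{\bar{u}}}H_x(s)\,ds-\int_{\tau^{u_\rho}\wedge\tau^{\bar{u}}}^{\tau^{\bar{u}}}k(s)\,dW(s),
\end{split}
\end{equation*}
where $H_x(s):=H_x(X^{\bar{u}}(s),Y^{\bar{u}}(s),Z^{\bar{u}}(s),\bar{u}(s),p(s),k(s),q(s),s)$. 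Subtracting, $\tilde{p}(\tau^{u_\rho}\wedge\tau^{\bar{u}})$ is the sum of four contributions: the increment $\beta_x(X^{\bar{u}}(\tau^{u_\rho}\wedge\tau^{\bar{u}}))-\beta_x(X^{\bar{u}}(\tau^{\bar{u}}))$; minus the increment $q(\tau^{u_\rho}\wedge\tau^{\bar{u}})\Psi_x(X^{\bar{u}}(\tau^{u_\rho}\wedge\tau^{\bar{u}}))-q(\tau^{\bar{u}})\Psi_x(X^{\bar{u}}(\tau^{\bar{u}}))$; minus $\int_{\tau^{u_\rho}\wedge\tau^{\bar{u}}}^{\tau^{\bar{u}}}H_x(s)ds$; plus $\int_{\tau^{u_\rho}\wedge\tau^{\bar{u}}}^{\tau^{\bar{u}}}k(s)dW(s)$. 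Crucially, the two remainder integrals involve only the \emph{fixed} limiting processes $p,k$, and no case split between $\tau^{u_\rho}\lessgtr\tau^{\bar{u}}$ is needed since $\tau^{u_\rho}\wedge\tau^{\bar{u}}\le\tau^{\bar{u}}$ always.

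Finally I would show each of the four contributions tends to $0$ in $L^2$. By Lemma \ref{lemma 3.2}, $|\tau^{u_\rho}\wedge\tau^{\bar{u}}-\tau^{\bar{u}}|\to 0$ as $\rho\to 0$. Since $X^{\bar{u}}(\cdot)$ and $q(\cdot)$ have a.s.\ continuous paths, $\beta_x,\Psi_x$ are continuous with at most linear growth (Assumptions \ref{assumption_1}, \ref{assumption_2}), and $\mathbb{E}\sup_{t\le\tau^{\bar{u}}}|X^{\bar{u}}(t)|^2<\infty$, $\mathbb{E}\sup_{t\le\tau^{\bar{u}}}|q(t)|^2<\infty$, the dominated convergence theorem gives the $L^2$-convergence to $0$ of the two increment terms. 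For the $dt$-remainder, Cauchy--Schwarz yields $\mathbb{E}\big|\int_{\tau^{u_\rho}\wedge\tau^{\bar{u}}}^{\tau^{\bar{u}}}H_x(s)ds\big|^2\le\big(\tau^{\bar{u}}-\tau^{u_\rho}\wedge\tau^{\bar{u}}\big)\,\mathbb{E}\int_0^{\tau^{\bar{u}}}|H_x(s)|^2ds\to 0$, the integral being finite since $p,k,q\in\mathcal{M}^2$. For the $dW$-remainder, It\^{o}'s isometry gives $\mathbb{E}\big|\int_{\tau^{u_\rho}\wedge\tau^{\bar{u}}}^{\tau^{\bar{u}}}k(s)dW(s)\big|^2=\mathbb{E}\int_0^{\tau^{\bar{u}}}\mathbf{1}_{[\tau^{u_\rho}\wedge\tau^{\bar{u}},\tau^{\bar{u}}]}(s)|k(s)|^2ds\to 0$ by dominated convergence, as $k\in\mathcal{M}^2(\mathbb{R}^{n\times d})$. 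Therefore $\mathbb{E}|\tilde{p}(\tau^{u_\rho}\wedge\tau^{\bar{u}})|^2\to 0$, and the Gronwall estimate above completes the proof. The main obstacle is precisely this terminal-value analysis: one must use the fact that $p^\rho$ is anchored at $\tau^{u_\rho}\wedge\tau^{\bar{u}}$ (rather than at $\tau^{u_\rho}$), so that only $p,k$ — and never the $\rho$-dependent $p^\rho,k^\rho$ — appear in the rollback integrals; had one instead rolled $p^\rho$ back over the vanishing interval, the term $\int k^\rho\,dW$ would carry no vanishing length factor and its convergence to $0$ would not be evident.
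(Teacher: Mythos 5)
Your proof is correct and follows essentially the route the paper intends: the paper gives no separate proof of this lemma, remarking only that it is ``similar to the proof of Lemma \ref{vari-limit}'', and your argument is exactly that template (difference BSDE, $L^2$ control of the terminal value at $\tau^{u_\rho}\wedge\tau^{\bar{u}}$ by rolling the limiting equation back over the vanishing interval, then It\^{o}/Gronwall, i.e.\ continuous dependence on the terminal value). Your additional observations --- that the $g_xq+l_x$ terms cancel so the difference equation is homogeneous, and that no split on $\tau^{u_\rho}\lessgtr\tau^{\bar{u}}$ or on the three regimes of $\tau^{\bar{u}}$ is needed because the terminal data of (\ref{hami-1}) carry no $\rho^{-1}$ factors --- are accurate and make the adjoint case strictly easier than Lemma \ref{vari-limit}.
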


\begin{remark}
Similar to the proof in Lemma \ref{vari-limit}, we can prove Lemma \ref{hami-limit}.
In Lemma \ref{vari-limit} and \ref{hami-limit}, $(\eta^\rho(t), \zeta^\rho(t), p^\rho(t), \kappa^\rho(t))$ can converge to $(\eta(t), \zeta(t), p(t), \kappa(t))$ for $t \in [0, \tau^{\bar{u}} \wedge \tau^{u_\rho}]$. This means that the terminal time can be regarded as $\tau^{\bar{u}}$ in stochastic maximum principle, because $\tau^{\bar{u}}\wedge\tau^{u_\rho} \to \tau^{\bar{u}}$ when $\rho \to 0$ by Lemma \ref{lemma 3.2}.
\end{remark}

\noindent  Now, we present \textbf{the proof of Theorem \ref{Theorem}}:
\begin{proof}
\textbf{ Firstly, we consider the case $(i)$} where $\tau^{\bar{u}}<T$. Applying It\^{o}'s formula to $\xi(t)p^\rho(t) + \eta^\rho(t)q(t)$, one can obtain that
\begin{align*}
&\mathbb{E}[\left(\xi(\tau^{\bar{u}} \wedge \tau^{u_\rho})p^\rho(\tau^{\bar{u}} \wedge \tau^{u_\rho}) + \eta^\rho(\tau^{\bar{u}} \wedge \tau^{u_\rho})q(\tau^{\bar{u}} \wedge \tau^{u_\rho})\right)-\left(\xi(0)p^\rho(0) + \eta^\rho(0)q(0)\right)]\\
=&\mathbb{E}\int_{0}^{\tau^{\bar{u}} \wedge \tau^{u_\rho}}\big[-\Big(l_x(X^{\bar{u}}(t),Y^{\bar{u}}(t),Z^{\bar{u}}(t),\bar{u}(t),t)\xi(t)
+l_y(X^{\bar{u}}(t),Y^{\bar{u}}(t),Z^{\bar{u}}(t),\bar{u}(t),t)\eta^\rho(t)\\
&+l_z(X^{\bar{u}}(t),Y^{\bar{u}}(t),Z^{\bar{u}}(t),\bar{u}(t),t)\zeta^\rho(t)
+l_u(X^{\bar{u}}(t),Y^{\bar{u}}(t),Z^{\bar{u}}(t),\bar{u}(t),t)v(t)\Big)\\
&+H_u^\rho(X^{\bar{u}}(t),Y^{\bar{u}}(t),Z^{\bar{u}}(t),\bar{u}(t),p^\rho(t),k^\rho(t),q(t),t)v(t)\big]dt.
\end{align*}
According to Lemma \ref{lemma 3.2}, \ref{vari-limit} and \ref{hami-limit}, let $\rho \to 0$ on both sides of the above equation and we have
\begin{align}\label{vari}
\begin{split}
&\mathbb{E}\int_{0}^{\tau^{\bar{u}}}H_u(X^{\bar{u}}(t),Y^{\bar{u}}(t),Z^{\bar{u}}(t),\bar{u}(t),p(t),k(t),q(t),t)v(t)dt\\
=&\mathbb{E}[\Big(\xi(\tau^{\bar{u}})\beta_x\left(X^{\bar{u} }\left(\tau^{\bar{u}}\right)\right) -\xi(\tau^{\bar{u}})
q\left(\tau^{\bar{u}}\right) \Psi_x\left(X^{\bar{u}}\left(\tau^{\bar{u}}\right)\right)\Big)
+\kappa\left(\tau^{\bar{u}}\right)q(\tau^{\bar{u}})+\eta(0)\gamma_y(Y^{\bar{u}}(0))]\\
&+\mathbb{E}\int_{0}^{\tau^{\bar{u}}}(l_x(X^{\bar{u}}(t),Y^{\bar{u}}(t),Z^{\bar{u}}(t),\bar{u}(t),t)\xi(t)
+ l_y(X^{\bar{u}}(t),Y^{\bar{u}}(t),Z^{\bar{u}}(t),\bar{u}(t),t)\eta(t)\\
&+ l_z(X^{\bar{u}}(t),Y^{\bar{u}}(t),Z^{\bar{u}}(t),\bar{u}(t),t)\zeta(t)
+ l_u(X^{\bar{u}}(t),Y^{\bar{u}}(t),Z^{\bar{u}}(t),\bar{u}(t),t)v(t))dt.
\end{split}
\end{align}
Applying Lemma \ref{equation_lemma} and \ref{lemma 3.7} to equation (\ref{vari}), one can obtain
\begin{align*}
&\mathbb{E}\int_{0}^{\tau^{\bar{u}}}\Big(H_u(X^{\bar{u}}(t),Y^{\bar{u}}(t),Z^{\bar{u}}(t),\bar{u}(t),p(t),k(t),q(t),t)v(t)+q(\tau^{\bar{u}})\frac{\tilde{\Psi}^{\bar{u}}\left(\tau^{\bar{u}}\right) \bar{h}(v(t), t)}{h^{\bar{u}}\left(\tau^{\bar{u}}\right)}\\
&-q(\tau^{\bar{u}})g\left(X^{\bar{u}}\left(\tau^{\bar{u}}\right), Y^{\bar{u}}\left(\tau^{\bar{u}}\right), Z^{\bar{u}}\left(\tau^{\bar{u}}\right), \bar{u}\left(\tau^{\bar{u}}\right)\right)\frac{\bar{h}(v(t), t)}{h^{\bar{u}}\left(\tau^{\bar{u}}\right)}
-\frac{\bar{h}(v(t), t) \tilde{\beta}^{\bar{u}}\left(\tau^{\bar{u}}\right)}{h^{\bar{u}}\left(\tau^{\bar{u}}\right)}\\
&-\mathbb{E}\left[l\left(X^{\bar{u}}\left(\tau^{\bar{u}}\right), Y^{\bar{u}}\left(\tau^{\bar{u}}\right), Z^{\bar{u}}\left(\tau^{\bar{u}}\right), \bar{u}\left(\tau^{\bar{u}}\right), \tau^{\bar{u}}\right)\right] \frac{\bar{h}(v(t), t)}{h^{\bar{u}}\left(\tau^{\bar{u}}\right)}
\Big)dt \ge 0.
\end{align*}
Therefore we set $v(t)=u(t)-\bar{u}(t)$ where $u(\cdot) \in \mathcal{U}$. When $\rho \to 0$, we can obtain
\begin{equation}
\label{theorem1}
\begin{aligned}
&H_u(X^{\bar{u}}(t),Y^{\bar{u}}(t),Z^{\bar{u}}(t),\bar{u}(t),p(t),k(t),q(t),t)(u-\bar{u}(t))+q(\tau^{\bar{u}})\frac{\tilde{\Psi}^{\bar{u}}\left(\tau^{\bar{u}}\right) \bar{h}(u-\bar{u}(t), t)}{h^{\bar{u}}\left(\tau^{\bar{u}}\right)}\\
&-q(\tau^{\bar{u}})g\left(X^{\bar{u}}\left(\tau^{\bar{u}}\right), Y^{\bar{u}}\left(\tau^{\bar{u}}\right), Z^{\bar{u}}\left(\tau^{\bar{u}}\right), \bar{u}\left(\tau^{\bar{u}}\right)\right)\frac{\bar{h}(u-\bar{u}(t), t)}{h^{\bar{u}}\left(\tau^{\bar{u}}\right)}
-\frac{\bar{h}(u-\bar{u}(t), t) \tilde{\beta}^{\bar{u}}\left(\tau^{\bar{u}}\right)}{h^{\bar{u}}\left(\tau^{\bar{u}}\right)}\\
&-\mathbb{E}\left[l\left(X^{\bar{u}}\left(\tau^{\bar{u}}\right), Y^{\bar{u}}\left(\tau^{\bar{u}}\right), Z^{\bar{u}}\left(\tau^{\bar{u}}\right), \bar{u}\left(\tau^{\bar{u}}\right), \tau^{\bar{u}}\right)\right] \frac{\bar{h}(u-\bar{u}(t), t)}{h^{\bar{u}}\left(\tau^{\bar{u}}\right)} \ge 0
\end{aligned}
\end{equation}
for any $u \in U$, a.e. $t \in\left[0, \tau^{\bar{u}}\right)$, and $P-$ a.s.

 \textbf{Secondly, we consider the case $(ii)$} where $\inf\left\{t: \mathbb{E}[\Phi(X^{\bar{u}}(t))] \geq \alpha, t \in[0, T]\right\}=T$. If there exists sequence $\rho_n \to 0$ as $n \to +\infty$ such that
$$
\tau^{u_{\rho_n}}=\inf \left\{t: \mathbb{E}\left[\Phi\left(X^{u_{\rho_n}}(t)\right)\right] \geq \alpha, t \in[0, T]\right\}<T,
$$
we can deduce the inequality (\ref{theorem1}).

If there exists sequence $\rho_n \to 0$ as $n \to +\infty$ such that
$$
\tau^{u_{\rho_n}}=\inf \left\{t: \mathbb{E}\left[\Phi\left(X^{u_{\rho_n}}(t)\right)\right] \geq \alpha, t \in[0, T]\right\}=T.
$$
By Lemma \ref{equation_lemma} and \ref{lemma 3.7}, we have
\begin{align*}
\begin{split}
&\mathbb{E}\int_{0}^{T}H_u(X^{\bar{u}}(t),Y^{\bar{u}}(t),Z^{\bar{u}}(t),\bar{u}(t),p(t),k(t),q(t),t)v(t)dt\\
=&\mathbb{E}[\Big(\xi(T)\beta_x\left(X^{\bar{u} }\left(T\right)\right) -\xi(T)
q\left(T\right) \Psi_x\left(X^{\bar{u}}\left(T\right)\right)\Big)
+\kappa\left(T\right)q(T)+\gamma_y(Y^{\bar{u}}(0))\eta(0)]\\
&+\mathbb{E}\int_{0}^{T}(l_x(X^{\bar{u}}(t),Y^{\bar{u}}(t),Z^{\bar{u}}(t),\bar{u}(t),t)\xi(t)
+ l_y(X^{\bar{u}}(t),Y^{\bar{u}}(t),Z^{\bar{u}}(t),\bar{u}(t),t)\eta(t)\\
&+ l_z(X^{\bar{u}}(t),Y^{\bar{u}}(t),Z^{\bar{u}}(t),\bar{u}(t),t)\zeta(t)
+ l_u(X^{\bar{u}}(t),Y^{\bar{u}}(t),Z^{\bar{u}}(t),\bar{u}(t),t)v(t))dt.
\end{split}
\end{align*}
Similar to the proof in the case $(i)$, we can obtain that
\begin{align*}
&\mathbb{E}\int_{0}^{T}[H_u(X^{\bar{u}}(t),Y^{\bar{u}}(t),Z^{\bar{u}}(t),\bar{u}(t),p(t),k(t),q(t),t)v(t)]dt \ge 0.
\end{align*}
Therefore we set $v(t)=u(t)-\bar{u}(t)$, and one can obtain
\begin{equation*}
H_u(X^{\bar{u}}(t),Y^{\bar{u}}(t),Z^{\bar{u}}(t),\bar{u}(t),p(t),k(t),q(t),t)(u(t)-\bar{u}(t)) \ge 0
\end{equation*}
where $t \in [0,T]$.for any $u \in U$, a.e. $t \in\left[0, T\right)$, and $P-$ a.s.
\textbf{Finally, we consider the case $(iii)$} where $\left\{t: \mathbb{E}[\Phi(X^{\bar{u}}(t))] \geq \alpha, t \in[0, T]\right\}=\emptyset$. In this case, $\tau^{u_\rho}=\tau^{\bar{u}}=T$, and we can obtain the same results as in the case $(ii)$.

This completes the proof.
\end{proof}

In case $(iii)$, $\left\{t: \mathbb{E}[\Phi(X^{\bar{u}}(t))] \geq \alpha, t \in[0, T]\right\}=\emptyset$, it means that $\mathbb{E}[\Phi(X^{\bar{u}}(t))] < \alpha, t \in[0, T]$. Thus it is same with the classical optimal control problem without state constraints. In the following, we give an example to verify the results of Theorem \ref{Theorem}.

\begin{example}
Let $n=m=d=1$, $T=1$, $U=[1,2]$, $\alpha=1$, $f(x,u)=x+u$, $\sigma(x,u)=0$, $g(x,y,z,u)=x+y+u$, $\Psi(x)=0$, $\beta(x)=0$, $\gamma(x)=0$, $l(x,y,z,u,t)=u$, $\Phi(x)=x$ and $X_0 = 0$. Therefore, the solution $(X^u\left ( \cdot \right ),  Y^u\left ( \cdot \right ),  Z^u\left ( \cdot \right ))$ of FBSDEs (\ref{2.4}) is given as follows
\begin{equation}\label{e1}
\begin{cases}
 X^u\left ( t \right )=\int_0^tu(s)e^{t-s}ds,
 \\Y^u\left ( t \right )=-\int_t^{\tau^u} (x(s)+u(s))e^{t-s}ds,
 \\Z^u\left ( t \right )= 0,
\end{cases}
\end{equation}
where the terminal time $\tau^u$ is defined as follows
\begin{equation}\label{exterminal}
    \tau ^u=\inf\left \{ t:\mathbb{E}\left [ X^u\left ( t
 \right )  \right ] \ge 1 , t\in\left [ 0,1\right ]  \right \} \wedge 1.
\end{equation}
The unique solution $(p\left ( \cdot \right ),  k\left ( \cdot \right ), q\left ( \cdot \right ))$ of equations (\ref{max-2}) and (\ref{max-3}) are
\begin{equation}\label{e2}
 p\left ( t \right )= 0,\ k\left ( t \right )= 0,\ q\left ( t \right )= 0,\ 0\leq t\leq 1.
\end{equation}
Note that $
h^u\left ( t \right ) =\mathbb{E}\left [X^u\left ( t \right ) + u\left ( t \right )\right ]=
X^u\left ( t \right ) + u\left ( t \right )$, we have
\begin{equation*}
\bar{h}(v(t), t)=\lim _{\rho \rightarrow 0} \frac{h^{u_\rho}(t)-h^{\bar{u}}(t)}{\rho}
=v(t)+\lim_{\rho \rightarrow 0} \frac{X^{u_\rho}(t)-X^{\bar{u}}(t)}{\rho}
=v(t)+\int_0^t e^{t-s} v(s)ds.
\end{equation*}

Now, we suppose $\tau^{\bar{u}}<1$ and the cost functional is
$$
J(u(\cdot))=\int_0^{\tau^{\bar{u}}}\bar{u}(s) \mathrm{d} s.
$$
From $(i)$ of Theorem \ref{Theorem}, combing equations (\ref{e1}) and (\ref{e2}), we have
\begin{equation*}
\begin{aligned}
&H_u(X^{\bar{u}}(t),Y^{\bar{u}}(t),Z^{\bar{u}}(t),\bar{u}(t),p(t),k(t),q(t),t)(u-\bar{u}(t))+q(\tau^{\bar{u}})\frac{\tilde{\Psi}^{\bar{u}}\left(\tau^{\bar{u}}\right) \bar{h}(u-\bar{u}(t), t)}{h^{\bar{u}}\left(\tau^{\bar{u}}\right)}\Big.\\
&-q(\tau^{\bar{u}})g\left(X^{\bar{u}}\left(\tau^{\bar{u}}\right), Y^{\bar{u}}\left(\tau^{\bar{u}}\right), Z^{\bar{u}}\left(\tau^{\bar{u}}\right), \bar{u}\left(\tau^{\bar{u}}\right)\right)\frac{\bar{h}(u-\bar{u}(t), t)}{h^{\bar{u}}\left(\tau^{\bar{u}}\right)}
-\frac{\bar{h}(u-\bar{u}(t), t) \tilde{\beta}^{\bar{u}}\left(\tau^{\bar{u}}\right)}{h^{\bar{u}}\left(\tau^{\bar{u}}\right)}\Big.\\
&\Big.-\mathbb{E}\left[l\left(X^{\bar{u}}\left(\tau^{\bar{u}}\right), Y^{\bar{u}}\left(\tau^{\bar{u}}\right), Z^{\bar{u}}\left(\tau^{\bar{u}}\right), \bar{u}\left(\tau^{\bar{u}}\right), \tau^{\bar{u}}\right)\right] \frac{\bar{h}(u-\bar{u}(t), t)}{h^{\bar{u}}\left(\tau^{\bar{u}}\right)}\\
=&(u(t)-\bar{u}(t)) - \bar{u}(\tau^{\bar{u}})
\frac{1}{1+\bar{u}(\tau^{\bar{u}})}\left ( \int_0^te^{t-s}(u(s)-\bar{u}(s))ds+u(t)-\bar{u}(t)  \right ) \ge 0
\end{aligned}
\end{equation*}
where $X^{\bar{u}}(\tau^{\bar{u}})=1$ and $Y^{\bar{u}}(\tau^{\bar{u}})=0$. Then it follows that
\begin{equation}\label{excon}
\frac{1}{1+\bar{u}\left(\tau^{\bar{u}}\right)}(u(t)-\bar{u}(t))-
\bar{u}(\tau^{\bar{u}})
\frac{1}{1+\bar{u}(\tau^{\bar{u}})} \int_0^te^{t-s}(u(s)-\bar{u}(s))ds\ge 0
\end{equation}
for $t \in [0, \tau^{\bar{u}}]$.
Then, it follows that
\begin{equation*}
u(t)-\bar{u}(t)\int_0^te^{t-s}u(s)ds
\ge \bar{u}(t)-\bar{u}(t)\int_0^te^{t-s}\bar{u}(s)ds
\end{equation*}
Therefore we can obtain an optimal control $\bar{u}(t)=1$ for
$t \ a.e. \in [0,\tau^{\bar{u}}]$ and
\begin{equation*}
\begin{aligned}
&X^{\bar{u}}(t)=e^t-1,\\
&Y^{\bar{u}}(t)=-e^t(\tau^{\bar{u}}-t).
\end{aligned}
\end{equation*}

Then, by equation (\ref{varying_time}), we have $\tau^{\bar{u}} = ln2$ and optimal pair $(\bar{u}(\cdot), \tau^{\bar{u}})$ satisfies the equation (\ref{excon}). The cost functional is
$$
J(\bar{u}(\cdot))=\int_0^{\tau^{\bar{u}}}\bar{u}(s)\mathrm{d} s
=ln2.
$$

In the following, we consider the classical stochastic optimal control problems under state constraints. The cost functional is given as follows:
$$
\tilde{J}(u(\cdot))=\int_0^{T}u(s)\mathrm{d} s,
$$
and the state constraints is
$
\mathbb{E}\left [ X^u\left ( T\right )  \right ] \ge 1.
$
By equation (\ref{e1}), we have that the optimal control is $\bar{u}(t)=1$ for $t \in [0,T]$. The cost functional is
$$
\tilde{J}(\bar{u}(\cdot))=\int_0^{T}\bar{u}(s)\mathrm{d} s=1.
$$

The above results show that we can stop the controlled systems at terminal time $\tau^{\bar{u}}=ln2$ where $X^{\bar{u}}\left ( \tau^{\bar{u}}\right )=1$ and $ ln2=J(\bar{u}(\cdot))<\tilde{J}(\bar{u}(\cdot))=1$, and we can obtain a better performance cost functional.
\end{example}
\section{Conclusion}\label{sectionV}
As shown in \cite{yang2020varying} and \cite{yang2022varying}, the terminal time varies according to the state constraints can obtain a better performance cost functional than the classical optimal control problem. Here, we extend the model in \cite{yang2020varying} to a new stochastic optimal control problem driven by FBSDEs. To solve this new recursive stochastic optimal control problems, we introduce novel $\rho$-moving variational and  adjoint equations and establish the related stochastic maximum principle.

However, future studies should be considered, such as establishing linear quadratic optimal control problem with varying terminal time, mean-field optimal control problem with varying terminal time, and so on.

\bibliographystyle{unsrt}
\bibliography{citation}

\end{document}